 \theoremstyle{plain}
 \newtheorem{thm}{Theorem}[section]
 \newtheorem{theor}{Theorem}
 \newtheorem{lemma}[thm]{Lemma}
  \newtheorem{rem}[thm]{Remark}
  \theoremstyle{definition}
 \theoremstyle{remark}
\newcommand{\rr}{\mathbb{R}}
\newcommand{\loren}{\mathbb{L}}
\newcommand{\pp}{\varphi}
\newcommand{\en}{\nu_{p}M}
\newcommand{\pe}{\tau^{\mathcal{N}}}
\newcommand{\pae}{\widetilde{\tau}}
\newcommand{\meti}{\left\langle}
\newcommand{\metd}{\right\rangle}
\newcommand{\esf}{\mathbb{S}}
\newcommand{\hip}{\mathbb{H}}
\newcommand{\hor}{\mathcal{H}}
\newcommand{\aff}{\mathbb{A}}
\newcommand{\an}{\mathbb{A}^{n}}
\newcommand{\hol}{\widetilde{\Phi}}
\newcommand{\wnu}{\widetilde{\nu}}
\newcommand{\ws}{\widetilde{\Sigma}}
\newcommand{\nul}{\mathcal{N}}
\newcommand{\pr}{\mathbf{pr}}
\newcommand{\oo}{\Phi^{0}_{r}\cdot}
\newcommand{\dd}{\mathcal{D}}
\begin{document}
 \title[On the nullity distribution of a submanifold of a space form] {On the nullity distribution
 of a submanifold of a space form}
\author[Francisco Vittone]{Francisco Vittone}

\begin{abstract}If $M$ is a submanifold of a space form, the nullity distribution $\nul$ of its second fundamental form is (when defined) the common kernel of its shape operators. In this paper we will give a local description of any submanifold of the Euclidean space by means of its nullity distribution. We will also show the following global result: if $M$ is a complete, irreducible submanifold of the Euclidean space or the sphere then its complementary distribution $\nul^{\bot}$ is completely non integrable. This means that any two points in $M$ can be joined by a curve everywhere perpendicular to $\nul$. We will finally show that this statement is false for a submanifold of the hyperbolic space. 

\end{abstract}

 \maketitle
 \section{Introduction}
Let $M^{n}$ be a submanifold of a space form $Q^{n+k}$. The nullity distribution $\nul$ of the second fundamental form  of $M$ is defined as the common kernel of the shape operators of $M$. The index of nullity (or relative nullity) of $M$ at $p$ is the dimension of $\nul_{p}$. It is well known, from Codazzi equation, that $\nul$ is an autoparallel distribution restricted to the open and dense subset of  $M$ where the index of nullity is locally constant. Moreover, its integral manifolds are totally geodesic submanifolds of the ambient space $Q^{n+k}$. 
If $M$ is complete, and one restricts to the open subset $U$ of points of $M$ where the index of nullity is minimal, then the integral manifolds of $\nul$ through points of $U$ are also complete (see for instance \cite[Ch. 5]{daj} or \cite{ferus}). 

Observe that for an Euclidean submanifold, the nullity distribution arises naturally in many problems since  $\nul=\ker(dG)$, where $G:M\to G_{k}(\rr^{n+k})$ is the Gauss map. Moreover, there are many examples of complete submanifolds of the Euclidean space with constant index of nullity and some of them can be obtained from  the so called Gauss parametrizations, which involve the Gauss map (see \cite{dajgrom}). 

In this paper we will study the perpendicular distribution $\hor:=(\nul)^{\bot}$. 

For a general distribution $\mathcal{D}$ on a Riemannian manifold, it is an interesting problem to decide whether it is completely non integrable, in the sense that any two points can be joined by a curve always tangent to $\mathcal{D}$ (this means that the Carnot-Caratheodory distance, associated to $\mathcal{D}$,  is finite \cite{gromov}). 
For the case of submanifolds, a very important result about completely non-integrability, is the so called \textit{Homogeneous Slice Theorem} \cite{hot}, which has many important applications (see \cite{obc}, \cite{tho}). It states, in particular, that for an irreducible complete (connected) isoparametric submanifold of the sphere, one can join any two points by a curve always perpendicular to any given eigendistribution of its shape operator. 

Our main goal is to prove a general global result about the non-integrability of the distribution $\hor$ for submanifolds of the Euclidean space or the sphere. Namely,

\begin{theor}
Let M be an immersed, complete, irreducible submanifold of the Euclidean space or the sphere with constant index of nullity. Then any two points of $M$ can be joined by a curve always perpendicular to the nullity distribution.
\label{globalthm}
\end{theor}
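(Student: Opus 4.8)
The plan is to recast the conclusion as an accessibility statement, reduce it by the orbit theorem to showing that a single accessibility class is all of $M$, and then use the structure theory of the relative nullity foliation together with irreducibility to rule out the remaining case. Call a piecewise smooth curve in $M$ \emph{horizontal} if it is everywhere tangent to $\hor=\nul^{\bot}$, and for $p\in M$ let $L_{p}\subseteq M$ be the set of endpoints of horizontal curves starting at $p$. By the Stefan--Sussmann orbit theorem each $L_{p}$ is a connected immersed submanifold of $M$, the $L_{p}$'s partition $M$, each is invariant under the local flows of horizontal vector fields, and the tangent distribution $\hat{\hor}$ of the family $\{L_{p}\}$ is the smallest generalized distribution containing $\hor$ which is invariant under those flows (in particular it contains all iterated Lie brackets of sections of $\hor$). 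If $\dim L_{p}=n$ for one $p$, then $L_{p}$ is open in $M$; since the $L_{p}$'s partition the connected manifold $M$ and each would then be open, there is only one of them, which is the assertion of the theorem. So it is enough to show $\nul_{p}\subseteq\hat{\hor}_{p}$ for every $p$. Suppose not: on the open dense set where $\dim L_{p}$ is locally constant we then obtain, on a nonempty open set $V$, a proper orthogonal decomposition $\hat{\hor}=\hor\oplus\nul^{0}$ with $\nul^{0}:=\hat{\hor}\cap\nul$ and $0\le\operatorname{rank}\nul^{0}<\nu$.

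Because the index of nullity is the constant $\nu$, $\nul$ is autoparallel with complete totally geodesic (in $M$, and in $Q$) leaves, so the full apparatus of the relative nullity foliation is available on all of $M$: the splitting tensor $C\colon\nul\times\hor\to\hor$, $C_{T}X=-(\nabla_{X}T)^{\hor}$, obeys along each leaf a matrix Riccati equation coming from Codazzi, and $C_{T}$ commutes with the shape operators via $A_{\xi}C_{T}=C_{T}^{t}A_{\xi}$. Completeness of the leaves forces these Riccati solutions to exist for all values of the leaf parameter, which is exactly the mechanism making the operators $\Phi^{0}_{r}$ along nullity geodesics polynomial in the local description obtained above. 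I would exploit this to analyze $\nul^{0}$ on $V$, showing first that, after shrinking $V$, $\nul^{0}$ has constant rank and is autoparallel with complete totally geodesic leaves (the bracket/flow directions generating it being tightly constrained by the polynomial behaviour along the leaves), and then, the crucial point, that the complementary directions $\nul^{1}:=(\nul^{0})^{\bot}\cap\nul$ satisfy $C_{T}\equiv 0$ for $T\in\Gamma(\nul^{1})$; this makes $\nul^{1}$ a $\nabla$-parallel distribution on $V$ and the orthogonal decomposition $TM|_{V}=(\hor\oplus\nul^{0})\oplus\nul^{1}$ parallel.

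A nontrivial orthogonal parallel splitting of $TM$ over $V$ splits $V$ isometrically as a Riemannian product by the local de Rham theorem, and since $\nul^{1}$ lies in the relative nullity all shape operators are block diagonal for this splitting, so by Moore's lemma the splitting is realized extrinsically: an open subset of $M$ is an extrinsic product of a totally geodesic factor tangent to $\nul^{1}$ and a submanifold of smaller dimension. Completeness of $M$ and of the nullity foliation lets this local product structure be propagated across the closed, lower dimensional locus where $\dim L_{p}$ is not locally constant, contradicting the irreducibility of $M$; this settles the Euclidean case. The spherical case reduces to it by passing to the cone $CM\subseteq\rr^{n+k+1}$ over $M\subseteq\esf^{n+k}$: the index of nullity of $CM$ is the constant $\nu+1$, the cone over a nullity leaf of $M$ is a linear subspace of $\rr^{n+k+1}$, horizontal curves of $M$ lift to horizontal curves of $CM$, and $CM$ is irreducible because the irreducible $M$ is not a round sphere; applying the Euclidean statement to $CM$ (the vertex of the cone requiring only a routine separate treatment) gives the spherical statement.

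The main obstacle is the content of the second paragraph: proving that $\nul^{0}$, the "accessible part'' of the nullity, is a bona fide autoparallel distribution of constant rank and --- above all --- that the remaining nullity directions $\nul^{1}$ are \emph{parallel}, i.e.\ that $C_{T}$ vanishes identically on them. The orbit theorem only tells us that $\nul^{0}$ is spanned by horizontal fields and their iterated brackets; turning this into the uniform vanishing of the splitting tensor on the orthogonal complement is where the Riccati equation and the completeness of the leaves must be used essentially, and it is the technical heart of the argument. A secondary point needing care is the behaviour of the distributions across the singular locus where the dimension of the accessibility class jumps, which is again governed by completeness of the nullity foliation.
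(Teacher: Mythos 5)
Your setup is sound and in fact parallels the paper's: your accessibility classes $L_{p}$ are exactly the classes $[p]$ the paper works with, the reduction to showing that the ``inaccessible'' part of $\nul$ would force a forbidden autoparallel, $A$-invariant splitting is the right endgame, and your treatment of the sphere by passing to the (truncated) cone matches in spirit the paper's use of the union of parallel manifolds $\bigcup_{|\varepsilon|<\delta}M_{\varepsilon\eta}$. The gap is in your second paragraph, and you have located it yourself: everything hinges on showing that $\nul^{0}$ is a genuine autoparallel distribution and that the splitting tensor vanishes on $\nul^{1}$, and you offer only a declared intention to extract this from the Riccati equation along the nullity leaves together with completeness. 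That mechanism cannot deliver the conclusion on its own. Along a complete nullity geodesic in the Euclidean case the Riccati equation $C'=C^{2}$ has solution $C(t)=C_{0}(I-tC_{0})^{-1}$, and global existence only forbids nonzero \emph{real} eigenvalues of $C_{0}$; nilpotent or complex-spectrum splitting tensors survive, so completeness does not force $C_{T}\equiv 0$ on any prescribed subspace (the paper's counterexample in Remark \ref{hyp} is built precisely from a nilpotent one-parameter group, showing how weak the long-time-existence constraint is by itself). Moreover, the Stefan--Sussmann theorem gives no regularity for $\nul^{0}$ beyond its being spanned by iterated brackets of horizontal fields; constancy of its rank, autoparallelism, and above all the vanishing of $C_{T}$ on the orthogonal complement are exactly the statements that require a new input, and that input is absent.

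The paper supplies it from a different direction: it makes $\pr:M\to M/\nul$ into an affine (resp.\ sphere) bundle with a parallel displacement, proves via a local foliation by projected holonomy tubes (Theorem \ref{localth}) and an Ambrose--Singer type argument (Lemma \ref{ambrose}) that the restricted holonomy group $\Phi^{0}_{r}$ acts transitively or polarly on each fiber (Lemma \ref{accionpolar}), and then exploits the isoparametric structure of a principal orbit $\Phi^{0}_{r}\cdot p=E_{0}(p)\times S(p)$: a parallel normal field $\xi$ to $[p]$ built from a vector normal to the orbit must make $Id-\widehat{A}_{\xi}$ invertible on $\hor_{p}$ for every $\xi$ in a whole subspace, forcing $\widehat{A}_{\xi}|_{\hor}=0$, hence $\dd=\hor\oplus E_{0}$ and $\dd^{\bot}$ autoparallel and $A$-invariant, which irreducibility forbids unless the orbits fill the fibers. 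Note also that the splitting the paper contradicts is $\dd\oplus\dd^{\bot}$ with $\dd$ the nullity of $[p]$ in $M$ (so $\dd^{\bot}$ contains the spherical factor $TS$), not your $\hat{\hor}\oplus\nul^{1}$; your proposed splitting is a different and a priori stronger statement that would itself need justification. As it stands the proposal is a plausible outline whose technical heart is missing.
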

Observe that, in general, the canonical projection $\pr:M\to M/\nul$, where $M/\nul$ is the set of all maximal integral manifolds of $\nul$, is not a Riemannian submersion. 

If we drop the assumption that the index of nullity is constant, then Theorem \ref{globalthm} still holds on any connected component of the open subset where this index is minimal. On the other hand, the assumption of completeness can not be removed. In fact, there are abundant many local counter-examples of the above theorem obtained as the union of suitable parallel manifolds. Moreover, Theorem \ref{localth} shows that any local counter-example arises in this way.

Theorem \ref{globalthm} is not true for submanifolds of the hyperbolic space (see remark \ref{hyp}).

For infinite dimensional isoparametric Hilbert submanifolds of codimension at least two, a similar result to Theorem \ref{globalthm} was proved by Heinzte and Liu in \cite{Heintze}, using strongly the isoparamatricity condition. This was a crucial step in the proof of the homogeneity of this type of submanifolds \cite{Heintze}. Theorem \ref{globalthm} shows that, in finite dimension, the non-integrability of the distribution $\nul^{\bot}$ is a very general global fact, that does not depend on any extra property of the submanifold. 

\section{Basic definitions and general properties}

\subsection{The nullity distribution}
Let $M$ be an $n$-dimensional (immersed) Riemannian submanifold of a (simply connected) space form, i.e., the Euclidean space $\rr^{n+k}$, the sphere $\esf^{n+k}$ or the hyperbolic space $\hip^{n+k}$. We denote by $\nabla$ the Levi-Civita connection of $M$  and by $\widetilde{\nabla}$ the Levi-Civita connection of the ambient space form. We will always denote by $\nu M:=TM^{\bot}$ the normal bundle of $M$ endowed with the normal connection $\nabla^{\bot}$.
The second fundamental form of $M$ will be denoted by $\alpha$ and the shape operator by $A$. These two tensors are related by the well known formula $$\meti\alpha(X,Y),\xi\metd=\meti A_{\xi}X,Y\metd,$$ which is symmetric in $X$ and $Y$, for any $X,Y$ tangent fields and $\xi$ normal field. 

The connection $\nabla\oplus\nabla^{\bot}$ in $TM\oplus\nu M$ will be denoted by $\overline{\nabla}$. The  Codazzi equation states that $\meti (\overline{\nabla}_{X}A)_{\xi}Y,Z\metd$, or equivalently $(\overline{\nabla}_{X}\alpha)(Y,Z)$, is symmetric in $X,Y,Z$.

The  \textsl{nullity subspace} of $M$ at $p$ is the subspace $\nul_{p}$ of $T_{p}M$ defined by 
$$\nul_{p}:=\{x\in T_{p}M:\alpha(x,\cdot)\equiv 0\}=\bigcap_{\xi\in\en}\ker(A_{\xi}).$$ 
If $M\subset \rr^{n+k}$ then $\nul_{p}$ coincides with $\ker(d \mathbf{G}_{p})$
where $\mathbf{G}:M\to G_{k}(\rr^{n+k})$ is the Gauss map that assigns to each point $p\in M$ its normal space $\en$.

Let $\mu(p):=\dim(\nul_{p})$, which is called the \textsl{index of nullity} of $M$ at $p$. Let $\mathcal{C}$ be the set of points $p\in M$ such that $\mu$ is constant in a neighborhood of $p$. Then it is standard to prove, since $\mu$ can not increase locally, that $\mathcal{C}$ is an open and dense subset of $M$ and so $\mu$ is constant on each connected component of $\mathcal{C}$. Moreover, from Codazzi equation, it follows that $\nul:p\to\nul_{p}$ defines a $C^{\infty}$ autoparallel  distribution (and hence with totally geodesic integral manifolds) on each connected component of $\mathcal{C}$. It is not difficult to see that its integral manifolds are also totally geodesic in the ambient space form. 
If $M$ is complete, then any (maximal) integral manifold of  $\nul$ through a point with minimal index of nullity is complete (see \cite{ferus}). 
Then

\begin{lemma} Let $M$ be a complete Riemannian submanifold of a space form with constant index of nullity. Then $\nul$ is a $C^{\infty}$ autoparallel distribution of $M$ whose integral manifolds are complete and totally geodesic in the ambient space. \label{null}
\end{lemma}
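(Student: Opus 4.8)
The plan is to collect the facts already recorded in the discussion preceding the statement, the only non-routine ingredient being the cited completeness result. Since the index of nullity $\mu$ is constant on $M$, the open dense subset $\mathcal{C}$ of points near which $\mu$ is locally constant is all of $M$; hence, as recalled above, $\nul$ is a well-defined $C^{\infty}$ distribution on $M$, and by the Codazzi equation it is autoparallel, i.e. $\nabla_{T}S\in\Gamma(\nul)$ for all $T,S\in\Gamma(\nul)$. Concretely: for $T,S\in\Gamma(\nul)$ and $X$ an arbitrary tangent field one computes $(\overline{\nabla}_{T}\alpha)(S,X)=-\alpha(\nabla_{T}S,X)$, while $(\overline{\nabla}_{X}\alpha)(S,T)=0$, and the symmetry of $\overline{\nabla}\alpha$ in its three arguments forces $\alpha(\nabla_{T}S,\cdot)\equiv 0$. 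In particular $\nul$ is integrable and each maximal integral manifold (leaf) $L$ is totally geodesic in $M$.

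Next I would upgrade ``totally geodesic in $M$'' to ``totally geodesic in the ambient space''. Let $L$ be a leaf and let $X,Y$ be vector fields tangent to $L$, hence sections of $\nul$ along $L$. By the very definition of $\nul$ we have $\alpha(X,Y)=0$, so the Gauss formula gives $\widetilde{\nabla}_{X}Y=\nabla_{X}Y$; and $\nabla_{X}Y\in\Gamma(\nul)$ by autoparallelism, so it is tangent to $L$. Thus the second fundamental form of $L$, regarded as a submanifold of the space form, vanishes identically, which is exactly the statement that $L$ is totally geodesic in $Q^{n+k}$; equivalently, $L$ is an open piece of a totally geodesic $\mu$-dimensional space form inside $Q^{n+k}$.

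Finally, the completeness of the leaves. Since $\mu$ is constant, every point of $M$ has minimal index of nullity, so the theorem of Ferus quoted above (\cite{ferus}; see also \cite[Ch.~5]{daj}) applies directly: the maximal integral manifold of $\nul$ through any point of $M$ is complete. The mechanism behind that theorem --- which I would only sketch --- is that along a unit-speed nullity geodesic $\gamma$ of $M$ the shape operator $A_{\xi}$ in any $\nnorm$-parallel normal field $\xi$ along $\gamma$ satisfies a Riccati-type equation whose solution can degenerate only where the index of nullity drops; completeness of $M$ makes $\gamma$ defined on all of $\rr$, and constancy of $\mu$ excludes the degeneration, so $\gamma$ is a full geodesic line contained in the leaf through $\gamma(0)$. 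Combined with the previous step this shows each leaf is a complete totally geodesic submanifold of $Q^{n+k}$, which is the assertion of the lemma. I expect this last step --- the completeness of the leaves, resting on the external result of Ferus --- to be the only point that is not a direct unwinding of Codazzi and the Gauss formula.
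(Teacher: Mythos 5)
Your proposal is correct and follows exactly the route the paper takes: the lemma is stated there as a summary of the preceding discussion, namely the Codazzi-symmetry argument for autoparallelism (which you carry out explicitly and correctly), the Gauss-formula upgrade to totally geodesic in the ambient space, and the citation of Ferus for completeness of the leaves, which applies since constant index of nullity means every point has minimal nullity. Nothing is missing; your explicit unwinding of $(\overline{\nabla}_{T}\alpha)(S,X)=-\alpha(\nabla_{T}S,X)$ together with $(\overline{\nabla}_{X}\alpha)(S,T)=0$ is precisely the standard argument the paper alludes to.
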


Under the assumptions of Lemma \ref{null}, denote by $M/\nul$ the set of maximal connected integral manifolds of $\nul$ and let $\pr:M\to M/\nul$ be the  projection to the quotient.

Recall that a coordinate chart $(U,\pp=(x^{1},\cdots,x^{n}))$ of $M$  is called \textsl{plane} for the distribution $\nul$ if for each $a=(a^{1},\cdots,a^{n})\in \pp(U)$, the slice $$S_{a}=\{q\in U:x^{l+1}(q)=a^{l+1},\cdots,x^{n}(q)=a^{n}\}$$ is an integral manifold of $\nul$, where $l$ is the constant index of nullity of $M$.  

From Frobenius theorem, there exists a plane chart $(U,\pp)$ around each point that intersects each integral manifold of $\nul$ in a disjoint countable union of slices of $(U,\pp)$. Since the integral manifolds of $\nul$ are totally geodesic in the ambient space form, it is not difficult to see that there is a plane chart around each point, that intersects each integral manifold of $\nul$  in at most one slice. Then the distribution $\nul$ is regular. 

We give $M/\nul$ the quotient topology and so $\pr$ is open (see \cite{palais}). From the fact that the integral manifolds of $\nul$ are totally geodesic in $M$ one has, as it is not difficult to show, that $\mathcal{R}=\{(x,y)\in M\times M: \pr(x)=\pr(y)\}$ is closed in $M\times M$. This implies that $M/\nul$ is a Hausdorff space (see \cite{kelley}) and therefore it is a differentiable manifold (see \cite[ThmVIII,ChI]{palais}).

\subsection{Fiber bundle structure.}
\label{fibrado}
All throughout this section, $M$ will be a submanifold of the Euclidean space or the sphere. 

 Recall that given three differentiable manifolds $E, N$ and $F$ we say that $\pi:E\to N$ is a \textit{fiber bundle with standard fiber} $F$, if
 \begin{itemize}
 \item[i)] $\pi$ is a $C^{\infty}$ suryective map;
 \item[ii)] there is an open covering $\mathcal{U}$ of $N$ such that for every $U\in\mathcal{U}$ there exists a differentiable map $\Psi:\pi^{-1}(U)\to F$  such that the function $$\overline{\Psi}:=(\pi,\Psi):\pi^{-1}(U)\to U\times F$$ is a diffeomorphism. 
 \end{itemize}
 The map $\overline{\Psi}$ is called a \textsl{local trivialization} for the fiber bundle. 

The projection $\pi$ is a submersion and for every $p\in N$, the fiber $E_{p}=\pi^{-1}(p)$ is an embedded submanifold of $E$ diffeomorphic to $F$. Moreover, if $(U,\overline{\Psi})$ is a local trivialization such that $p\in U$, then the diffeomorphism is given by $\Psi_{|E_{p}}$. 

Consider now the group $Diff(F)$ of diffeomorphisms of $F$ and let $(\pi,\Theta)$, $(\pi,\Psi)$ be local trivializations over open sets $U$ and $V$ on $N$. Then the function $f_{\Theta,\Psi}:U\cap V\to Diff(F)$ given by $f_{\Theta,\Psi}(p)=\Theta\circ\Psi^{-1}_{|E_{p}}$ is called the \textsl{transition function} between both trivializations. The fiber bundle $\pi:E\to N$ is said to have  \textsl{structure group} $G$ if any transition function takes values in a subgroup $G$ of $Diff(G)$. 

Denote by $\an$ the space $\rr^{n}$ with its natural affine structure. A fiber bundle $\pi:E\to N$ with standard fiber $\aff^{n}$ is an \textsl{affine  bundle} if each fiber $E_{p}$ has an affine space structure such that for every local trivialization $(\pi,\Psi)$, $\Psi_{|E_{p}}:E_{p}\to\an$ is an affine isomorphism. Equivalently, $\pi:E\to N$ is an affine bundle if it has structure group $Aff(n):=GL(n)\ltimes \rr^{n}$, the group of affine transformations of $\an$ (analogous to \cite[Prop. 1.14]{poor}). 

A fiber bundle with standard fiber $\esf^{n}$ is called a \textsl{sphere bundle}. Then:

\begin{lemma}
Let $M$ be a complete submanifold of the Euclidean space or the sphere with constant index of nullity $l$ and let $\nul$ be its nullity distribution. Then $\pr:M\to M/\nul$ is an affine bundle if $M$ is a Euclidean submanifold and it is a sphere bundle with structure group $O(l+1)$ if $M$ is a submanifold of the sphere.  
\end{lemma}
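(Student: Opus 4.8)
The plan is to build local trivializations of $\pr$ directly from the plane charts for $\nul$ and to show the transition functions are affine (resp.\ orthogonal). Fix a point and a plane chart $(U,\pp=(x^1,\dots,x^n))$ as described above, chosen so that $U$ meets each integral leaf of $\nul$ in at most one slice; then $\pr(U)$ is an open set in $M/\nul$ and the last $n-l$ coordinates descend to a chart on $M/\nul$. For a leaf $L$ through a point of $U$, the slice $L\cap U$ is an open piece of a totally geodesic submanifold of the ambient space form, hence (by completeness of the leaves, Lemma \ref{null}) extends to a complete totally geodesic $L$; in the Euclidean case $L$ is an affine $l$-plane $\an$, and in the spherical case $L$ is a round $l$-sphere $\esf^{l}$. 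The first step is therefore to produce, on a possibly smaller neighborhood, a diffeomorphism $\Psi:\pr^{-1}(\pr(U))\to \an$ (resp.\ $\esf^l$) whose restriction to each fiber $L$ is the canonical affine (resp.\ isometric) identification of the totally geodesic leaf with the model fiber. The natural way to get a \emph{smooth} such $\Psi$ is to pick a smooth local section $\sigma$ of $\pr$ together with a smooth orthonormal frame field along $\sigma$ spanning $\nul$, and to use the ambient exponential map of the space form along the leaves: $\Psi$ records the affine/spherical coordinates of a point of the leaf relative to this moving frame based at $\sigma$.

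Next I would verify that $(\pr,\Psi)$ really is a local trivialization, i.e.\ that $\overline\Psi=(\pr,\Psi)$ is a diffeomorphism onto $\pr(U)\times\an$ (resp.\ $\pr(U)\times\esf^l$). Surjectivity and bijectivity are immediate from the previous step once $U$ is shrunk so that each leaf meets it in exactly one slice; smoothness of $\Psi$ follows from smoothness of $\sigma$, of the chosen frame, and of the ambient exponential map; smoothness of $\overline\Psi^{-1}$ follows because along each leaf the map is a fixed real-analytic parametrization (a line/plane parametrization in the Euclidean case, a trigonometric parametrization of the sphere in the spherical case) depending smoothly on the base point. This is the routine part. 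One also checks that in the spherical case the leaf $\esf^l$ is a great $l$-sphere (it is totally geodesic in $\esf^{n+k}$), which is what forces the structure group down to $O(l+1)$ rather than the full diffeomorphism group.

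Finally I would compute the transition functions. Given two such trivializations over overlapping sets, the transition function at a base point $q$ is $\Theta\circ\Psi^{-1}$ restricted to the leaf $L_q$; by construction both $\Theta|_{L_q}$ and $\Psi|_{L_q}$ are the canonical identifications of the totally geodesic leaf with the model fiber relative to two different affine (resp.\ orthonormal) frames of the same affine plane (resp.\ great sphere), so the change of frame is an element of $Aff(l)=GL(l)\ltimes\rr^l$ in the Euclidean case and of $O(l+1)$ in the spherical case; smoothness in $q$ is inherited from the smooth dependence of the frames. By the characterization quoted in the excerpt (structure group $Aff(n)$ gives an affine bundle, structure group $O(l+1)$ a spherical bundle with that group), this completes the proof. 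The main obstacle is the first step: arranging a genuinely smooth global-on-the-fiber trivialization, i.e.\ smoothly propagating a frame of $\nul$ and the ambient totally geodesic parametrization across a whole leaf, rather than just locally; the totally geodesic nature of the leaves together with completeness is exactly what makes this possible, since it lets the ambient exponential map serve as a canonical, smoothly varying chart on every fiber at once.
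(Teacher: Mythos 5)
Your proposal is correct and follows essentially the same route as the paper: a plane chart for $\nul$ gives a local section $\sigma$ of $\pr$, a smooth frame of $\nul$ along $\sigma$ identifies each totally geodesic fiber (an affine $l$-plane, resp.\ a great $l$-sphere in the span $T_{\sigma(r)}M_r\oplus\rr\sigma(r)$) with the model fiber, and the transition functions are changes of affine (resp.\ orthonormal) frames, hence lie in $Aff(l)$ (resp.\ $O(l+1)$). Your use of the ambient exponential map is just a reformulation of the paper's explicit parametrizations $\sigma(r)+\sum v^iX_i(r)$ and $\sum v^iE_i(\sigma(r))+v^{l+1}\sigma(r)$.
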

\begin{proof}
The proof is standard, but we include it since it is difficult to find in the bliography and we will need some of the notation introduced here.

In order to simplify the notation, we will assume that $M$ is a $1-1$ immersed submanifold. 
Let $(U, \pp=(x^{1},\cdots,x^{n}))$ be a plane regular chart of $M$ with respect to $\nul$, such that $0\in\pp(U)$. Let $l$ be the constant index of nullity of $M$. Then there exists a unique $(n-l)$-dimensional chart $\overline{\pp}$ in $M/\nul$ with domain $\overline{U}=\pr(U)$ such that $\overline{\pp}\circ\pr=(x^{l+1},\cdots,x^{n})$ (see \cite{palais}). Define the local section $\sigma:\overline{U}\to \pr^{-1}(\overline{U})$ by $\sigma(r)=\pp^{-1}(j_{0}(\overline{\pp}(r)))$, where $j_{0}:\rr^{n-l}\to \rr^{n}$ is the inclusion $j_{0}(x^{l+1},\cdots,x^{n})=(0,\cdots,0,x^{l+1},\cdots,x^{n})$. 

Let us begin with the case $M\subset\rr^{n+k}$. Set $M_{r}:=\pr^{-1}(r)$. From Lemma \ref{null}, $M_{r}=p+T_{p}M_{r}$ for any $p\in M_{r}$ (identifying $M_{r}$ with the corresponding subspace of $\rr^{n+k}$). Set $X_{i}(r)=(\partial/\partial x^{i})_{\sigma(r)}\in \rr^{n+k}$ via this identification. So,  if $r\in\overline{U}$ then   any element $x$ of $M_{r}$ is of the form $$x=\sigma(r)+\sum_{i=1}^{l}v^{i}X_{i}(r).$$

For $(r,(v^{1},\cdots,v^{l}))\in\overline{U}\times\aff^{l}$, define  $\rho(r,(v^{1},\cdots,v^{l}))=\sigma(r)+\sum_{i=1}^{l}v^{i}X_{i}(r)$. Then it is not difficult to see that $\rho$ is a diffeormorphism from $\overline{U}\times \aff^{l}$ into $\pr^{-1}(\overline{U})$ and so $\Psi=\rho^{-1}$ is a local trivialization for $\pr:M\to M/\nul$. It is clear from this construction that transition functions are affine.

If $M\subset\esf^{n+k}$. Then $M_{r}$ is the $l$-sphere in the ($l+1$)-dimensional linear subspace 
\begin{equation}
L_{r}:=T_{\sigma(r)}M_{r}\oplus \rr\sigma(r)
\label{Lr}
\end{equation} of $\rr^{n+k+1}$ (regarding $M$ as a submanifold of $\rr^{n+k+1}$ and identifying $T_{\sigma(r)}M_{r}$ with the corresponding subspace of $\rr^{n+k+1}$). Let $\{E_{i}\}$ be the (local) orthonormal frame of $TM$ obtained by applying the Gram-Schmidt orthogonalization process to $\{(\partial/\partial x^{i})\}$. Set $\rho:\overline{U}\times\esf^{l}\to \pr^{-1}(\overline{U})$ as $$(r,(v^{1},\cdots,v^{l+1}))\mapsto  \sum_{i=1}^{l}v^{i}E_{i}(\sigma(r))+v^{l+1}\sigma(r).$$
Then it is not difficult to see that $\rho$ is a diffeormophism and $\Psi=\rho^{-1}$ is a local trivialization for $\pr:M\to M/\nul$. It is clear from construction that the transition functions are in $O(l+1)$.
\end{proof}

Given a piece-wise differentiable curve $\widetilde{c}:I\to M$ we say that $\widetilde{c}$ is \textsl{horizontal with respect to $\nul$} if $\widetilde{c}\;'(t)\;\bot\;\nul_{\widetilde{c}\;(t)}$, for every $t\in I$. Given a curve $c:I\to M/\nul$ we say that $\widetilde{c}:I\to M$ is a \textsl{horizontal lift} of $c$ if $\widetilde{c}$ is horizontal and $\pr(\widetilde{c}(t))=c(t)$ for every $t\in I$. 

As for the case of a vector bundle with a linear connection, one has that any curve in $M/\nul$ can be globally lifted. Namely,

\begin{lemma}
Let $c:I\to M/\nul$ be a (piece-wise differentiable) curve and let $p$ be any point in $\pr^{-1}(c(0))$. Then there is one and only one horizontal lift $\widetilde{c}$ of $c$ to $M$ such that $\widetilde{c}(0)=p$ (called the \emph{horizontal lift} of $c$ through $p$). 
\label{horlift}
\end{lemma}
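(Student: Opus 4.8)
The plan is to treat $\hor=\nul^{\bot}$ as a horizontal distribution complementary to the fibers of $\pr:M\to M/\nul$, and to show that, read in the local trivializations constructed in the previous lemma, the equation for a horizontal lift is a \emph{linear} (in the Euclidean case, affine) system of ordinary differential equations; global existence and uniqueness of the lift then reduce to the corresponding classical facts about linear systems, together with a splicing argument over $I$. To begin, since $\nul=\ker(d\pr)$ is a $C^{\infty}$ distribution (Lemma \ref{null}), the complement $\hor$ is $C^{\infty}$ and $d\pr$ restricts to an isomorphism $\hor_{p}\to T_{\pr(p)}(M/\nul)$ for every $p\in M$. Hence, on any subinterval on which $c$ is differentiable, a horizontal lift is forced to solve a first order ODE; this yields local uniqueness, and since the set of parameters at which two horizontal lifts of $c$ coincide is both open and closed, uniqueness holds on all of $I$.

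For existence, fix a plane regular chart $(U,\pp)$ together with the section $\sigma$, the trivializing set $\overline{U}=\pr(U)$, and the frame $X_{1},\dots,X_{l}$ (Euclidean case) or $E_{1},\dots,E_{l}$ (spherical case) of the previous lemma, and suppose $c(t)\in\overline{U}$ on some subinterval. In the Euclidean case, every lift of $c$ into $\pr^{-1}(\overline{U})$ has the form $\widetilde{c}(t)=\sigma(c(t))+\sum_{i=1}^{l}v^{i}(t)X_{i}(c(t))$, and differentiating in $\rr^{n+k}$ the condition $\widetilde{c}\,'(t)\perp\nul_{\widetilde{c}(t)}=\operatorname{span}\{X_{i}(c(t))\}$ becomes the affine system
\begin{equation*}
\sum_{i=1}^{l}(v^{i})'(t)\,X_{i}(c(t))=-\operatorname{proj}_{\operatorname{span}\{X_{i}(c(t))\}}\!\Big[\tfrac{d}{dt}\sigma(c(t))+\sum_{i=1}^{l}v^{i}(t)\,\tfrac{d}{dt}X_{i}(c(t))\Big],
\end{equation*}
whose coefficients are continuous (piece-wise smooth if $c$ is) on all of $\{t:c(t)\in\overline{U}\}$; by the theory of linear systems it has a unique solution there, and affineness makes the induced map between fibers an affine isomorphism. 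In the spherical case, expanding $\widetilde{c}(t)=\sum_{i=1}^{l}v^{i}(t)E_{i}(\sigma(c(t)))+v^{l+1}(t)\sigma(c(t))$ in the moving orthonormal basis of $L_{c(t)}$, one checks that horizontality of a lift into $M\subset\esf^{n+k}$ is equivalent to $\widetilde{c}\,'(t)\perp L_{c(t)}$, which gives a homogeneous linear system $\mathbf v'(t)=-\Gamma(t)\mathbf v(t)$ with $\Gamma(t)$ skew-symmetric; thus the solution exists for all admissible $t$, $|\mathbf v|$ is preserved, and the induced fiber map lies in $O(l+1)$.

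To globalize, given $c:I\to M/\nul$ and $p\in\pr^{-1}(c(0))$, pull back via $c$ an open cover of $M/\nul$ by trivializing sets; on any compact $[0,s]\subset I$, a Lebesgue number argument yields $0=t_{0}<\dots<t_{N}=s$ with each $c([t_{j},t_{j+1}])$ contained in one such $\overline{U}$, and applying the previous step successively, with $\widetilde c(t_{j})$ as initial condition, produces a piece-wise differentiable horizontal lift on $[0,s]$; letting $s\to\sup I$ and invoking the uniqueness established above yields the lift on all of $I$. I expect the only genuine difficulty to be recognizing the lift equation as a linear (resp. affine) system --- this is exactly where one uses that the leaves of $\nul$ are affine subspaces of $\rr^{n+k}$ (resp. great subspheres of $\esf^{n+k}$ contained in the fixed $(l+1)$-plane $L_{r}$), equivalently that $\hor$ is a connection with structure group $Aff(l)$ (resp. $O(l+1)$), from which global horizontal lifts can alternatively be deduced from the standard completeness of parallel transport in a principal bundle; everything else is routine.
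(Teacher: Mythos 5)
Your proof is correct, and it takes a genuinely different route from the paper's. The paper first invokes the general local lifting property of submersions (a lift with maximal domain exists through every point, uniformly for nearby initial points), and then makes the domain uniform over a whole fiber by a structural argument: for $M\subset\esf^{n+k}$ it uses compactness of the fibers, and for $M\subset\rr^{n+k}$ it picks an affine frame $q_{0},\dots,q_{l}$ of the fiber, intersects the $l+1$ maximal domains, and observes that the horizontal lift through an arbitrary point is the corresponding affine combination of the lifts through the $q_{i}$ (this works because $\nul$ is the same linear subspace at every point of an affine fiber, so affine combinations of horizontal curves are horizontal). You instead write the lift equation explicitly in the trivializations $\rho$ of the fiber-bundle lemma and recognize it as an affine system in $(v^{1},\dots,v^{l})$ (Euclidean case) or a homogeneous linear system with skew-symmetric coefficient matrix in $(v^{1},\dots,v^{l+1})$ (spherical case, where your reduction of horizontality to $\widetilde{c}\,'(t)\perp L_{c(t)}$ is correct since $\widetilde{c}\,'\perp\widetilde{c}$ automatically on the sphere); global existence for linear systems then gives the lift over any interval on which $c$ stays in one trivializing set, and a Lebesgue-number splicing finishes. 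Both arguments exploit exactly the same structural fact --- that the bundle has structure group $Aff(l)$, resp.\ $O(l+1)$ --- but yours makes it quantitative via the non-blow-up of linear ODEs, which has the small bonus of reproving that the holonomy maps are affine, resp.\ orthogonal, while the paper's is shorter and avoids coordinates. Your closing remark that one could instead pass to the associated principal bundle and use completeness of its connection is also sound, and is essentially the construction the paper carries out later in the Ambrose--Singer lemma.
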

\begin{proof} Let $c:I\to M/\nul$ be any curve in $M/\nul$. 
It is standard to see, from basic ordinary differential equation theory, that it suffices to prove the following:  for every $b\in I$ there exists an open interval $J_{b}\subset I$ such that $b\in J_{b}$ and such that for every initial condition $z\in \pr^{-1}(c(b))$, there exists a horizontal curve $\widetilde{c}_{b,z}:J_{b}\to M$ which projects down to  $c$ and such that $c_{b,z}(b)=z$. 


 As for any submersion,  we have that for every $z_{0}\in\pr^{-1}(c(b))$ there exists a horizontal lift $\widetilde{c}_{z}$ of $c$ with maximal domain $J_{z_{0}}$ such that $\widetilde{c}_{z}(b)=z$, for every $z$ in a neighborhood of $z_{0}$. 
 
 So, if $M\subset \esf^{n+k}$, the result follows from the fact that the fibers are compact. 
 
 If $M\subset\rr^{n+k}$, let $q_{0},q_{1},\cdots,q_{n}$ form an affine frame on $\pr^{-1}(\gamma(b))$. Set $J_{b}=\cap_{i=0}^{n}J_{q_{i}}$. Then if $z\in pr^{-1}(b)$, $z=q_{0}+\sum_{i=1}^{n}v^{i}(q_{i}-q_{0})$ and it is not difficult to see that $c_{b,z}(t)=c_{q_{0}}(t)+\sum_{i=0}^{n}v^{i}(c_{q_{0}}(t)-c_{q_{i}}(t))$ is the curve we were looking for. 
\end{proof}

We can now define a parallel displacement in $\pr:M\to M\nul$. For $r\in M/\nul$, set $M_{r}:=\pr^{-1}(r)$. Given a (piece-wise differentiable) curve $c:[0,1]\to M/\nul$ we define the $\nul$-\textsl{parallel displacement} $\pe_{c}:M_{c(0)}\to M_{c(1)}$ as $$\pe_{c}(p):=\widetilde{c}_{p}(1)$$
where $\widetilde{c}_{p}$ is the horizontal lift of $c$ through $p$. 

Since every horizontal lift $\widetilde{c}(t)$ of $c$ is perpendicular to the family of totally geodesic submanifolds $M_{c(t)}$,  one has (see e.g. \cite[Lemma 2.8]{olberger})

\begin{lemma}
Let $c:I\to M/\nul$ be a  curve in $M/\nul$. Then the parallel displacement $\pe_{c}:M_{c(0)}\to M_{c(1)}$ is an isometry \qed
\end{lemma}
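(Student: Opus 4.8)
The plan is to reduce the claim to a one–parameter variation computation. First I would cut $I$ into finitely many subintervals on which $c$ is $C^{1}$ and note that $\pe_{c}$ is the composition of the corresponding parallel displacements, so it suffices to treat a $C^{1}$ curve $c$; by uniqueness of horizontal lifts (Lemma \ref{horlift}), $\pe_{c}$ is a bijection with inverse $\pe_{c^{-}}$, where $c^{-}$ is the reversed curve, and both it and its inverse are smooth because horizontal lifts depend smoothly on their initial point (the same ODE smooth–dependence already used for Lemma \ref{horlift}). To see that $\pe_{c}$ is a Riemannian isometry it then suffices to show that its differential is a linear isometry at each $p\in M_{c(0)}$. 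Fix such a $p$ and a vector $v\in T_{p}M_{c(0)}$, choose a smooth curve $s\mapsto p_{s}$ in $M_{c(0)}$ with $p_{0}=p$ and $\dot p_{0}=v$, and set $H(s,t):=\widetilde{c}_{p_{s}}(t)$, the horizontal lift of $c$ through $p_{s}$. Then $H$ is smooth, $H(\cdot,0)$ is our curve in $M_{c(0)}$, $H(\cdot,t)$ is a curve in the fiber $M_{c(t)}$ for every $t$, and $d(\pe_{c})_{p}(v)=J(1)$, where $J(t):=(\partial_{s}H)(0,t)$. So everything reduces to proving that $t\mapsto\|J(t)\|$ is constant.

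For the constancy I would argue as follows, writing $\partial_{s},\partial_{t}$ for the coordinate vector fields of $H$ and $\nabla$ for the Levi-Civita connection of $M$. For fixed $t_{0}$, the curve $s\mapsto H(s,t_{0})$ lies in $M_{c(t_{0})}$, which by Lemma \ref{null} is a totally geodesic submanifold of $M$; hence its $\nabla$-acceleration $\nabla_{\partial_{s}}\partial_{s}H$ stays tangent to $M_{c(t_{0})}$, i.e.\ lies in $\nul$. On the other hand $\partial_{t}H(s,t_{0})=(\widetilde{c}_{p_{s}})'(t_{0})$ is horizontal, i.e.\ perpendicular to $\nul$. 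Therefore $\langle\partial_{t}H,\partial_{s}H\rangle\equiv 0$ along $s\mapsto H(s,t_{0})$, and differentiating this identity in $s$ while using $\langle\partial_{t}H,\nabla_{\partial_{s}}\partial_{s}H\rangle=0$ yields $\langle\nabla_{\partial_{s}}\partial_{t}H,\partial_{s}H\rangle=0$. Since $\nabla$ is torsion free and $[\partial_{t},\partial_{s}]=0$, we have $\nabla_{\partial_{t}}\partial_{s}H=\nabla_{\partial_{s}}\partial_{t}H$, so that $\tfrac{d}{dt}\|J\|^{2}=2\langle\nabla_{\partial_{t}}\partial_{s}H,\partial_{s}H\rangle=2\langle\nabla_{\partial_{s}}\partial_{t}H,\partial_{s}H\rangle=0$. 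Hence $\|J\|$ is constant, $\|d(\pe_{c})_{p}(v)\|=\|J(1)\|=\|J(0)\|=\|v\|$, and $\pe_{c}$ is an isometry.

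The only genuinely delicate points are bookkeeping ones rather than real obstacles: the joint smoothness of $H$ in $(s,t)$, which is the standard smooth dependence of ODE solutions on parameters (already implicitly invoked in Lemma \ref{horlift}), and the clean splitting of vectors into their $\nul$- and $\hor$-components, which rests entirely on the fibers $M_{r}$ being totally geodesic in $M$ (equivalently, on $\nul$ being autoparallel), guaranteed by Lemma \ref{null}. I emphasize that no curvature assumption on $M$ is used: the mechanism is simply ``horizontal lift velocity $\perp$ fiber'' together with ``fiber totally geodesic in $M$''. The sphere case needs no separate argument here, since the only place where compactness of the fibers intervened — the existence of global lifts — is already settled in Lemma \ref{horlift}. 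Alternatively, one may simply quote \cite[Lemma 2.8]{olberger}, where the same statement is proved in this generality for a family of mutually perpendicular totally geodesic submanifolds.
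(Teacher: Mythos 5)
Your argument is correct, and it is exactly the one the paper points to: the paper proves nothing here, merely remarking that the horizontal lifts are perpendicular to the family of totally geodesic fibers and citing \cite[Lemma 2.8]{olberger}, which is the fact your variation computation establishes. Your write-up simply supplies in full the standard computation (horizontal lift velocity perpendicular to $\nul$, fibers totally geodesic, symmetry of the pullback connection) that the citation delegates, so there is nothing to correct and no genuinely different route being taken.
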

If $r\in M/\nul$, we denote by $\Omega(r)$ (resp. $\Omega^{0}(r)$) the set of (piece-wise smooth) loops (resp. null-homotopic loops) in $M/\nul$ based at $r$. The \textsl{holonomy group} (associated to $\nul$) based at $r\in M/\nul$ is 
$$\Phi_{r}:=\{\pe_{c}:c\in\Omega(r)\}\subset \text{Iso}(M_{r})$$
and the \textsl{restricted holonomy group} (associated to $\nul$) based at $r$ is the connected subgroup
$$\Phi^{0}_{r}:=\{\pe_{c}:c\in\Omega^{0}(r)\}\subset \text{Iso}_{0}(M_{r}).$$
It is standard to prove, as in the case of a linear connection (see \cite[Teor. 2.25]{poor} or \cite{kn}) that $\Phi_{r}$ and $\Phi^{0}_{r}$ are Lie groups and that $\Phi^{0}_{r}$ is the connected component of the identity in $\Phi_{r}$. 

The \textsl{local holonomy group} (associated to $\nul$) based at $r\in M/\nul$ is defined by 
$$\Phi^{loc}_{r}=\bigcap\Phi^{0}(r,\overline{U})$$
varying $\overline{U}$ among all open neighborhoods of $r$, where $\Phi^{0}(r,\overline{U})=\{\pe_{c}\in\Phi^{0}_{r}:c\subset\overline{U}\}$. One has that there is an open neighborhood $\overline{U}$ of $r$ such that $\Phi^{loc}_{r}=\Phi^{0}(r,\overline{V})$ for every neighborhood $\overline{V}$ of $r$ contained in $\overline{U}$ (see \cite[Prop. 10.1]{kn}).
Let $$(\pe_{c})^{*}(\Phi^{loc}_{r}):=\{(\pe_{c})^{-1}\circ\pe_{\alpha}\circ\pe_{c}:\pe_{\alpha}\in\Phi^{loc}_{r}\}.$$ Then one has the following Ambrose-Singer type theorem, which will be very useful to prove our main global result. 

\begin{lemma}
Let $\mathcal{C}$ be a dense subset of $M/\nul$. Then the restricted holonomy group $\Phi^{0}_{r}$ is generated by the groups $\pe_{c}(\Phi^{loc}_{c(1)})$ varying $c$ among all piece-wise differentiable curves in $M/\nul$ such that $c(0)=r$ and $c(1)\in \mathcal{C}$. 
\label{ambrose}
\end{lemma}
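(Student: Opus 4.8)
The plan is to adapt the classical Ambrose–Singer / reduction theorem for principal and linear connections (as in \cite[Ch. II]{kn}) to the present setting of the $\nul$-parallel displacement, using density of $\mathcal{C}$ to replace the ``arbitrary small loop at every point'' input of the usual argument by ``small loops based at points of $\mathcal{C}$ only''. Let $H$ be the subgroup of $\Phi^{0}_{r}$ generated by all the conjugated local holonomy groups $\pe_{c}(\Phi^{loc}_{c(1)})$ with $c(0)=r$, $c(1)\in\mathcal{C}$. Clearly $H\subseteq\Phi^{0}_{r}$, since each $\pe_{c}(\Phi^{loc}_{c(1)})$ consists of holonomies of null-homotopic loops based at $r$ (a small loop at $c(1)$ conjugated by $c$), so the content is the reverse inclusion $\Phi^{0}_{r}\subseteq H$.

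The key steps, in order, would be: (1) Recall that $\Phi^{0}_{r}$ is a connected Lie subgroup of $\mathrm{Iso}_{0}(M_{r})$ (stated in the excerpt), hence it is generated by any neighbourhood of the identity, and in fact it is generated by $\bigcup$ of the holonomies of loops contained in arbitrarily small neighbourhoods together with parallel transport along paths — more precisely, by the standard local-to-global argument, $\Phi^{0}_{r}$ is generated by $\bigcup_{c}\pe_{c}(\Phi^{loc}_{c(1)})$ where $c$ ranges over \emph{all} curves starting at $r$, with no restriction on $c(1)$. This is exactly \cite[Ch. II, Ch. X]{kn} transcribed to our displacement, using Lemma \ref{horlift} (global horizontal lifts) and the fact that the parallel displacement depends only on the curve. (2) Therefore it suffices to show that for an \emph{arbitrary} endpoint $r'=c(1)$, the group $\pe_{c}(\Phi^{loc}_{r'})$ is already contained in $H$. (3) Fix such an $r'$ and pick, by the local holonomy stabilization \cite[Prop. 10.1]{kn}, a neighbourhood $\overline{U}$ of $r'$ with $\Phi^{loc}_{r'}=\Phi^{0}(r',\overline{V})$ for every smaller neighbourhood $\overline{V}$. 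Choose a point $r''\in\mathcal{C}\cap\overline{U}$ (possible by density) and a curve $\delta$ inside $\overline{U}$ from $r'$ to $r''$. Then conjugation by $\pe_{\delta}$ carries $\Phi^{loc}_{r'}$ into $\Phi^{loc}_{r''}$: indeed any small loop at $r'$ inside a small $\overline{V}\subset\overline{U}$ becomes, after conjugation by $\delta$, a loop at $r''$ contained in a small neighbourhood of $r''$, whence $\pe_{\delta}\,\Phi^{loc}_{r'}\,\pe_{\delta}^{-1}\subseteq\Phi^{loc}_{r''}$, and symmetrically the reverse inclusion, so $\pe_{\delta}\,\Phi^{loc}_{r'}\,\pe_{\delta}^{-1}=\Phi^{loc}_{r''}$. (4) Consequently $\pe_{c}(\Phi^{loc}_{r'})=\pe_{c}\pe_{\delta}^{-1}\bigl(\Phi^{loc}_{r''}\bigr) = \pe_{c\cdot\delta}(\Phi^{loc}_{r''})\subseteq H$, because $c\cdot\delta$ is a curve from $r$ to the point $r''\in\mathcal{C}$. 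Combining (2) and (4) gives $\Phi^{0}_{r}\subseteq H$, and hence equality.

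I would be careful about two technical points that I expect to be the main obstacles. First, the claim in step (3) that conjugation by a short path matches up the two local holonomy groups needs the neighbourhoods to be chosen coherently: given a stabilizing neighbourhood $\overline{U}$ for $r'$ and one $\overline{U}'$ for $r''$, I must shrink so that $\delta\subset\overline{U}\cap\overline{U}'$ and so that every loop inside a small neighbourhood of $r'$, conjugated by $\delta$, lands inside $\overline{U}'$ (and vice versa); this is just continuity of the construction of horizontal lifts in Lemma \ref{horlift}, but it is the step where one actually uses that $M/\nul$ is a manifold and the bundle $\pr\colon M\to M/\nul$ is locally trivial. Second, step (1) — that $\Phi^{0}_{r}$ is generated by the conjugated \emph{local} holonomy groups over all endpoints — is the genuine Ambrose–Singer content and the only place where the Lie-group structure of $\Phi^{0}_{r}$ is essential; I would simply invoke \cite{kn} here, since the proof there uses only that parallel transport is defined along all piecewise-smooth curves (Lemma \ref{horlift}) and is independent of reparametrization, both of which we have verified for $\pe$. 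Everything else is formal group theory plus the density hypothesis, which enters only in step (3) to guarantee a point of $\mathcal{C}$ in each small neighbourhood.
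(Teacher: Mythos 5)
Your step (3) contains a genuine gap, and it is the step that carries all the weight of the density hypothesis. You claim that for $r''\in\mathcal{C}$ close to $r'$ and $\delta$ a short path from $r'$ to $r''$ one has $\pe_{\delta}\,\Phi^{loc}_{r'}\,\pe_{\delta}^{-1}=\Phi^{loc}_{r''}$, arguing that a small loop at $r'$ conjugates to a loop ``contained in a small neighbourhood of $r''$''. It does not: the conjugated loop always contains $\delta$ and $r'$, so it lies in neighbourhoods of $r''$ that do \emph{not} shrink to $r''$, and its holonomy only lands in $\Phi^{0}(r'',W)$ for $W\supseteq\delta\cup\overline{V}$, which may be strictly larger than $\Phi^{loc}_{r''}$. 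The inclusion that actually holds (this is the content of \cite[Prop.~10.1]{kn}, via the stabilizing neighbourhood) is the opposite one, $\Phi^{loc}_{r''}\subseteq\pe_{\delta}\,\Phi^{loc}_{r'}\,\pe_{\delta}^{-1}$: the dimension of the local holonomy group is only upper semicontinuous, and it can genuinely drop at nearby points. For instance, for a connection on a bundle over a disk whose curvature is $f\,dx\wedge dy\cdot X$ with $f$ a bump function, the local holonomy at a boundary point of the support of $f$ is one-dimensional while at points just outside it is trivial; so a dense set $\mathcal{C}$ may well contain, arbitrarily close to $r'$, only points $r''$ at which your required inclusion $\pe_{\delta}\,\Phi^{loc}_{r'}\,\pe_{\delta}^{-1}\subseteq\Phi^{loc}_{r''}$ fails. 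Since $\mathcal{C}$ is an \emph{arbitrary} dense set, you cannot select the good nearby points, and steps (2)--(4) collapse.

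The repair cannot be done at the level of group conjugation; one has to go inside the Ambrose--Singer proof, which is what the paper does. It packages $\pr:M\to M/\nul$ as a principal bundle $A(M)$ of affine frames (resp.\ orthonormal frames of the spaces $L_{r}$ in the spherical case) with an induced connection whose parallel transport realizes $\pe$, and then observes that the holonomy Lie algebra is spanned by the curvature values transported along the holonomy subbundle $P(h)$; since the curvature form is continuous and the span is a finite-dimensional (hence closed) subspace, the values over any \emph{dense} subset of $P(h)$ already span everything. Density enters there, through continuity of curvature, not through a matching of local holonomy groups at nearby points. Your step (1) is essentially \cite[Thm.~10.2]{kn} and is fine once the principal-bundle transfer is set up, but by itself it only reduces the problem to the false step (3).
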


\begin{proof} The proof is similar to the case of a linear connection and we shall indicate its main steps. 

For each $r\in M/\nul$, set $M_{r}=\pr^{-1}(r)$. 

We will start assuming that $M\subset\rr^{n+k}$. Let $A(M_{r})$ denote the set of affine isomorphisms $h:\aff^{l}\to M_{r}$, and set $A(M):=\bigcup_{r\in M/\nul}A(M_{r})$. Let $\pi:A(M)\to M/\nul$ be the canonical projection (i.e, $\pi(h)=r$ if $h\in A(M_{r})$). Then $A(M)$ is a principal fiber bundle with structure group $Aff(l)$ (see \cite[Ch. III, sec. 3]{kn}). 

As for the case of the connection on the frame bundle defined by a vector bundle with a linear connection, one can prove that there is a unique connection $\Gamma$ on $A(M)$ such that the corresponding parallel displacement $\pae$ is related to $\pe$ in the following way (see \cite{poor}). If $h\in A(M_{r})$, then $\{h(0),h(e_{1}),\cdots,h(e_{l})\}$ is an affine frame on $M_{r}$, where $e_{i}$ are the canonical versors in $\rr^{l}$. Let $c:I\to M/\nul$ be a differentiable curve such that $c(0)=r$ and let $q_{0}=\pe_{c}(h(0))$, $q_{i}=\pe_{c}(h(e_{i}))$, then $\pae_{c}(h)$ is the affine isomorphism from $\aff^{l}$ to $M_{c(1)}$ that maps $0$ into $q_{0}$ and $e_{i}$ into $q_{i}$, $i=1,\cdots, l$. 

Given $h\in A(M)$, let $\hol^{0}_{h}$ and $\hol^{loc}_{h}$ denote the restricted and local holonomy groups of $\Gamma$ based at $h$, respectively (recall that $\hol^{0}_{h}$ is the set of elements $g\in Aff(l)$ such that $\pae_{c}(h)=h\circ g$ for some curve $c\in\Omega^{0}(\pi(h))$). If $\pi(h)=r$ then the map $T_{h}(g)=h\circ g\circ h^{-1}$ defines an isomorphism from $\hol^{0}_{h}$ to $\Phi^{0}_{r}$ and from $\hol^{loc}_{h}$ to $\Phi^{loc}_{r}$. 

Now, without almost any modification of the proof of Ambrose-Singer holonomy theorem \cite[Thm. 8.1]{kn} and its consequence \cite[Thm. 10.2]{kn} we can prove that the restricted holonomy group $\hol^{0}_{h}$ is generated by the local holonomy groups $\hol^{loc}_{f}$ varying $f$ in any dense subset $\mathcal{U}$ of the holonomy bundle $P(h)$ (i.e, the set of elements that can be joined to $h$ by a horizontal curve). So $\Phi^{0}_{\pi(h)}$ is generated by the groups $T_{h}(\hol^{loc}_{f})$ varying $f$ as before. Now let $\mathcal{U}=\pi^{-1}(\mathcal{C})\cap P(h)$. If $f\in \mathcal{U}$, then $f=\pae_{c}(h)$ for some $c:I\to M/\nul$ such that $c(0)=\pi(h)$ and $c(1)=\pi(f)\in\mathcal{C}$. From the way $\pae$ and $\pe$ are related it is not difficult to prove that if $g\in \hol^{loc}_{f}$ then $g=f^{-1}\circ\pe_{\alpha}\circ f$ for some $\pe_{\alpha}\in\Phi^{loc}_{\pi(f)}$ and $T_{h}(g)=(\pe_{c})^{-1}\circ \pe_{\alpha}\circ\pe_{c}$. So $T_{h}(\hol^{loc}_{f})=\pe_{c}(\Phi^{loc}_{c(1)})$ as we wanted to prove.

If $M\subset \esf^{n+k}$, consider the principal fiber bundle $A(M):=\bigcup_{r\in M/\nul}A(M_{r})$ with structure group $O(l+1)$ such that $A(M_{r})$ is the set of isometries from $\esf^{l}$ to $M_{r}$. Observe that the elements of $A(M)$ are in a $1-1$ correspondence with the set of orthonormal basis of the subspace $L_{r}$ defined by (\ref{Lr}). The proof follows in the same way as for a submanifold of the Euclidean space. 
\end{proof}

\subsection{Foliating a spherical tube by holonomy tubes}\label{tuboesferico} The technics of this section are mainly inspired on \cite{ocds}.

Let $M$ be a 
submanifold of $\rr^{n}$.
Assume that there exists a positive real number $\varepsilon$ such that the spherical tube $$N:=S_{\varepsilon}(M)=\{q+\xi_{q}:q\in M, \xi_{q}\in\nu_{q}M, \left\|\xi_{q}\right\|=\varepsilon\}$$
is a well defined hypersurface of $\rr^{n}$ (locally this is  always true). 
Consider the canonical projection 
$$
\pi: N\to M,\ \ \ 
 q+\xi_{q} \stackrel{\pi}{\mapsto} q
$$
and the (radial) parallel normal vector field $\Psi$ on $N$ given by 
$$\Psi(x)=\pi(x)-x.$$
Then $M$ is the parallel focal manifold $N_{\Psi}$ to $N$ and $\pi$ is the usual parallel focal  map. 
Since we will work locally, both in $M$ and in $N$, we may also assume that $N$ has constant index of nullity. 

Let $E_{0}=\ker(\hat{A}_{\Psi})$, where by $\hat{A}$ we denote the shape operator of $N$. Then $E_{0}$ is the nullity distribution of $N$, since $N$ is a hypersurface. 

For each $x\in N$, set $$S(x)=\pi^{-1}(\pi(x)).$$
Let $E_{1}=\ker(Id-\hat{A}_{\Psi})=\ker(d\pi)$. Then $E_{1}(x)=T_{x}S(x)$ for every $x\in N$. 

Regard $N$ and $M$ as submanifolds of the ($n+2$)-Lorentzian space $\loren^{n+2}$, identifying $\rr^{n}$ with an $n$-dimensional horosphere of the hyperbolic space $\mathbb{H}^{n+1}$. Denote by $\eta$ the radial normal vector field $\eta(x)=-x$ to $\mathbb{H}^{n+1}$ and set $\widetilde{\Psi}:=\delta\Psi+\eta$, for some small $\delta$ such that $\widetilde{\Psi}$ is timelike. 
Then $$\ker(Id-\hat{A}_{\widetilde{\Psi}})=E_{0}.$$
We can foliate $N$ by the holonomy tubes $$H(x)=(N_{\widetilde{\Psi}})_{-\widetilde{\Psi}(x)}\subset N$$
(locally, see \cite{obc}). We shall further assume that these holonomy tubes have all the same dimension, since we work locally.

If $x,y\in N$, we will denote $x\sim y$ if $x$ and $y$ can be joined by a differential curve in $N$ everywhere normal to $E_{0}$. Then $H(x)$ is locally given by 
$$H(x)=\{y\in N:x\sim y\}$$
(see \cite{ocds}, cf. \cite{hot}).
Since for every $x\in N$, $TS(x)\;\bot\; E_{0}$ we get that $$S(x)\subset H(x).$$

We now consider the distribution $\wnu$ on $N$ given by the normal spaces in $N$ to the holonomy tubes $H(x)$. We will need the following result from \cite[Prop. 2]{ocds}
\begin{lemma} \cite{ocds}
With the above notations, 
\begin{enumerate}
\item the distribution $\wnu$ is autoparallel, invariant under the shape operator of $N$ and contained in the nullity of $N$. Moreover, if $\ws(x)$ is the integral manifold of $\wnu$ through $x$, then $$\ws(x)=(x+\nu_{x}H(x))\cap N.$$ 
\item The restriction of $\wnu$ to any $H(x)$ is a parallel and flat sub-bundle of $\nu_{0}H(x)$, the maximal parallel flat sub-bundle of $\nu H(x)$. Moreover $\ws(y)$ moves parallel in the normal connection of the holonomy tube $H(x)$. \label{ellema1}
\item If $x\in\ws(q)$, then there is a parallel normal field $\varsigma$ to $H(q)$ such that $\varsigma(q)=x-q$ and such that $H(x)=H(q)_{\varsigma}$. \label{ellema2}
\end{enumerate} 
\label{ellema} \qed
\end{lemma}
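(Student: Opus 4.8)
The plan is to derive the whole lemma from the partial/holonomy--tube machinery of this section, applied to the focal submanifold $P:=N_{\widetilde{\Psi}}\subset\loren^{n+2}$ and the normal--holonomy orbit of the parallel normal field $-\widetilde{\Psi}$ of $P$. The conceptual reason the statement should be true is that passing to $\loren^{n+2}$ with the timelike field $\widetilde{\Psi}$ makes the focal map $\pi_{\widetilde{\Psi}}\colon N\to P$ collapse exactly $E_0$ (whereas the Euclidean tube projection $\pi$ collapses $E_1$); hence a holonomy tube over $P$ can only re--expand inside the nullity $E_0$ of $N$, which is what forces $\wnu\subset E_0$.

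First I would compute the tangent and normal spaces of $H(x)$. Since $\widetilde{\Psi}$ is parallel in $\nu N$, $d(\pi_{\widetilde{\Psi}})_x=\mathrm{Id}-\hat A_{\widetilde{\Psi}}$, with kernel $E_0(x)$; as $\hat A_{\widetilde{\Psi}}$ commutes with $\hat A_{\Psi}$ ($\hat A_{\eta}$ being a multiple of the identity), it preserves the $T_xN$--orthogonal complement $E_0(x)^{\perp}$, so the image of $d(\pi_{\widetilde{\Psi}})_x$ is $E_0(x)^{\perp}$. Writing $x=\hat x-\widetilde{\Psi}(x)$ with $\hat x=\pi_{\widetilde{\Psi}}(x)$, the standard holonomy--tube formula gives $$T_xH(x)=(\mathrm{Id}-\hat A^{P}_{-\widetilde{\Psi}(x)})\big(T_{\hat x}P\big)\ \oplus\ \mathfrak{hol}^{\perp}_{\hat x}(P)\cdot\big(-\widetilde{\Psi}(x)\big),$$ a horizontal part equal to $E_0(x)^{\perp}$ plus a vertical part tangent to the holonomy orbit; since $H(x)\subset N$, the vertical part must lie in $T_xN\cap\nu_{\hat x}P=E_0(x)$. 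Thus $T_xH(x)=E_0(x)^{\perp}\oplus V_x$ with $V_x\subseteq E_0(x)$, whence $\wnu_x=E_0(x)\ominus V_x\subseteq E_0(x)$. This already gives that $\wnu$ is contained in the nullity of $N$ and, being contained in $\ker\hat A$, is trivially invariant under the shape operator of the hypersurface $N$; it also gives $\nu^{\rr^{n}}_xH(x)=\rr\,\Psi(x)\oplus\wnu_x$.

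Next I would exploit the local description $H(x)=\{y\in N:x\sim y\}$. For $y\in H(x)$ one has $H(y)=H(x)$, so $\wnu_y=\nu_yH(x)\cap T_yN$; i.e.\ along $H(x)$ the distribution $\wnu$ is exactly the $N$--tangent part of the normal bundle $\nu H(x)$. Because $\wnu\subset E_0$ and the nullity leaves of $N$ are affine subspaces of $\rr^{n}$ along which $E_0$ is constant, a parallel normal field $\varsigma$ of $H(x)$ with values in $\wnu$ yields a parallel manifold $H(x)_{\varsigma}=\{y+\varsigma(y):y\in H(x)\}$ still contained in $N$, and I would check that it is again one of the holonomy tubes, namely the $\sim$--class of $x+\varsigma(x)$. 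Hence $\wnu|_{H(x)}$ supports a whole affine family, parametrized by $\wnu_x$, of parallel manifolds of $H(x)$, which is exactly what it means for $\wnu|_{H(x)}$ to be a parallel, flat sub--bundle of $\nu H(x)$ — so contained in $\nu_0H(x)$ — and the leaves $\ws(y)$ move parallely in the normal connection of $H(x)$; this is (2). (By the Ricci equation, flatness on $\wnu$ is commutativity of the shape operators $\hat A^{H(x)}_{\xi}$, $\xi\in\wnu$, which are the shape operators of $H(x)$ inside $N$ in nullity directions and commute because the corresponding parallel displacements do.) Then, for $z$ in the $\wnu$--leaf through $x$, $H(z)=H(x)_{\varsigma}$ with $\varsigma(x)=z-x$, and since parallel manifolds share their (affine) tangent and normal spaces, $\nu_zH(z)=\nu_xH(x)$; so $(x+\nu_xH(x))\cap N$ is everywhere tangent to $\wnu$, i.e.\ it is the integral manifold $\ws(x)$, and as $\nu_xH(x)=\rr\Psi(x)\oplus\wnu_x$ with $\Psi(x)\perp T_xN$, this intersection is the affine slice $(x+\wnu_x)\cap N$. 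Being a piece of an affine subspace inside $N$, $\ws(x)$ is totally geodesic in $\rr^{n}$, hence in $N$, so $\wnu$ is autoparallel — finishing (1); and (3) then follows, since for $x\in\ws(q)$ we have $x-q\in\wnu_q\subset\nu_qH(q)$, and parallel transport within the parallel sub--bundle $\wnu|_{H(q)}$ produces the required $\varsigma$ with $H(q)_{\varsigma}=H(x)$.

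The main obstacle is the middle step: upgrading ``$\wnu|_{H(x)}$ is a sub--bundle of $\nu H(x)$'' to ``it is a parallel, flat sub--bundle contained in $\nu_0H(x)$'', and establishing that the parallel manifold $H(x)_{\varsigma}$ \emph{equals} — not merely is contained in — the holonomy tube $H(x+\varsigma(x))$. This is where the holonomy--tube structure of $H(x)$ over $P$ and the fact that the $\wnu$--directions are genuine flat nullity directions of $N$ have to be used together, and where one must be careful with the dimension counts (ensuring $\mathrm{Id}-\hat A^{P}_{-\widetilde{\Psi}(x)}$ is invertible on $T_{\hat x}P$) and with the path--independence of the holonomy--tube construction.
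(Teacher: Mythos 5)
First, note that the paper does not prove this lemma at all: it is quoted from Proposition 2 of \cite{ocds} and closed immediately, so there is no in-paper argument to compare against; your attempt has to stand on its own. Judged that way, your first paragraph is correct but overbuilt: since $H(x)$ is locally the set of points joined to $x$ by curves perpendicular to $E_{0}$, one has $E_{0}(x)^{\bot}\subseteq T_{x}H(x)\subseteq T_{x}N=E_{0}(x)^{\bot}\oplus E_{0}(x)$ directly, which already yields $\wnu_{x}=\nu_{x}H(x)\cap T_{x}N\subseteq E_{0}(x)$ and hence invariance under the shape operator of the hypersurface $N$; the holonomy--tube tangent space formula is not needed for this (though you should record that $\hat{A}_{\eta}=\mathrm{Id}$, which is what makes $\ker(\mathrm{Id}-\hat{A}_{\widetilde{\Psi}})=E_{0}$ rather than some other eigendistribution).

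The genuine gap is exactly where you flag it, and flagging it does not fill it. Your argument for (2) presupposes the existence of parallel normal fields of $H(x)$ with values in $\wnu$ spanning $\wnu_{x}$ --- but that existence is equivalent to $\wnu|_{H(x)}$ being a parallel sub--bundle, which is what is to be proved. Likewise your derivation of $\ws(x)=(x+\nu_{x}H(x))\cap N$ and of autoparallelity invokes statement (3) (that $H(z)=H(x)_{\varsigma}$ and that parallel manifolds share affine normal spaces), while (3) is in turn deduced from (2); the chain (1) from (3) from (2) has no independently established base case. What is actually required is an argument intrinsic to the holonomy--tube construction: all the tubes $H(y)=(N_{\widetilde{\Psi}})_{-\widetilde{\Psi}(y)}$ are holonomy tubes of the \emph{same} focal manifold $P=N_{\widetilde{\Psi}}$, normal parallel transport in $P$ permutes them equivariantly, and the differences $\widetilde{\Psi}(x)-\widetilde{\Psi}(y)$ for $y$ in a fixed slice lie in the normal space of the relevant normal holonomy orbit; this is what produces the parallel sections of $\wnu$ and identifies the parallel manifolds $H(x)_{\varsigma}$ with holonomy tubes, rather than merely showing they are contained in them. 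The flatness claim also needs more than ``the parallel displacements commute'': one must show $[\hat{A}^{H(x)}_{\xi},\hat{A}^{H(x)}_{\xi'}]=0$ for $\xi,\xi'\in\wnu$, which again follows from the affine parallel--family structure only after that structure is established. As written, the proposal is a plausible roadmap with the central bridge missing.
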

We aim to prove that $\widetilde{\Sigma}$, and hence $\wnu$, are constant along the fibers of $\pi$. 

Note that $H(x)$ has flat normal bundle, since $$\nu H(x)=\wnu_{|H(x)}\oplus\nu N_{|H(x)}=\wnu_{|H(x)}\oplus\rr\Psi_{|H(x)}.$$

Observe that $\Psi$ can be regarded as the curvature normal associated to the eigenvalue $1$ of the shape operator $\hat{A}_{\Psi}$ of the hypersurface $N$. 
On the other hand, $TN_{|H(x)}=\wnu_{|H(x)}\oplus TH(x)$ with $\wnu$ $\hat{A}$-invariant. Then $H(x)$ is $\hat{A}$-invariant. Finally, since $S(x)\subset H(x)$ we get that  $(E_{1})_{|H(x)}\subset TH(x)$. It then follows that $\overline{\Psi}:=\Psi_{|H(x)}$ is a parallel curvature normal of $H(x)$ for every $x\in N$ (see Lemma $1$ in \cite{ocds}). 

Let us consider $\ws(x)$, the totally geodesic integral manifold of $\wnu$ through $x$. 
Since $\wnu\subset E_{0}$, then  $T\ws(x)$ is contained in the nullity distribution of $N$ and therefore $\ws(x)$ is totally geodesic as a submanifold of $\rr^{n}$ (recall that the integral manifolds of the nullity distribution are totally geodesic in the ambient space). 

Fix $x\in N$ and let $y\in \ws(x)$. Let $\varsigma$ be the parallel normal vector field to $H(x)$ such that $\varsigma(x)=y-x$ given by Lemma \ref{ellema}, (\ref{ellema1}).
Let now $z\in H(x)$. We shall see that $\varsigma$ is constant in the ambient space along $S(z)\subset H(x)$. 

In fact, since $\overline{\Psi}$ and $\varsigma$ are both parallel and $\varsigma$ is tangent to the totally geodesic submanifolds $\ws$, we get that $\left\langle \overline{\Psi}, \varsigma\right\rangle\equiv 0$. On the other hand, given an arbitrary curve $c(t)\subset S(z)$, since $T\ws$ is contained in the nullity of $N$, one has $$\frac{d}{dt}\varsigma(c(t))=A^{H(x)}_{\varsigma}c'(t)=\left\langle \varsigma,\overline{\Psi} \right\rangle c'(t)\equiv 0.$$
So $\varsigma$ is constant along $c$ in the Euclidean ambient space. 

It now follows from Lemma \ref{ellema}, (\ref{ellema1}) that 
if $z\in H(x)$ and $w\in S(z)$, then 
\begin{equation}
\ws(z)=\ws(w)+(z-w).
\label{sigma}
\end{equation}
Observe also that, since $\ws(x)$ is contained in an integral manifold of the nullity distribution of $N$, then the parallel normal field $\Psi$ is constant in the ambient space along $\ws(x)$. So, if $z\in H(x)$, since $z+\varsigma(z)\in \ws(z)$, then 
\begin{equation}
\Psi(z)=\Psi(z+\varsigma(z)).
\label{proj}
\end{equation}

Equation (\ref{sigma}) implies that $\ws$, and hence $\wnu$, are constant along the fibers of $\pi$. And equation (\ref{proj}) implies that $\wnu$ projects down to a well defined distribution on $M$. Therefore we get the following lemma which is standard to prove (cf. \cite[Section 2.5]{ocds}).
\begin{lemma}
The distribution $\wnu$ projects down to a $C^{\infty}$ integrable distribution $\nu^{*}=\pi_{*}(\wnu)$ in $M$ which is autoparallel and contained in the nullity distribution of $M$. If $p\in M$ then $\Sigma^{*}(p)$, the integral manifold of $\nu^{*}$ through $p$, is a totally geodesic submanifold of the Euclidean ambient space and for any $x\in \pi^{-1}(p)$ $$\Sigma^{*}(p)=\ws(x)+\Psi(x).$$ 
The orthogonal complementary distribution $\hor^{*}$ to $\nu^{*}$ in $M$ is integrable, invariant under the shape operators of $M$ and the integral manifold through $p\in M$ locally coincides with  $\pi(H(x))$, for any $x\in\pi^{-1}(p)$. Moreover, the restriction of $\nu^{*}$ to $\pi(H(x))$ is a parallel and flat sub-bundle of the normal space $\nu(\pi(H(x)))$ in the ambient space. 
\label{lema1} \qed
\end{lemma}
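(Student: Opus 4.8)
The plan is to obtain every assertion by transporting, through the parallel focal map $\pi$, the structure already established on $N$ and on the holonomy tubes $H(x)$. The computational fact that makes this transport work is that $\pi=\mathrm{id}+\Psi$ is the parallel focal map of the parallel normal field $\Psi$ of $N$, so that $d\pi_{x}=\mathrm{Id}-\hat{A}_{\Psi}$ on $T_{x}N$; hence $d\pi_{x}$ is injective on any complement of $E_{1}(x)=\ker d\pi_{x}$ and, on the subspace $\wnu_{x}\subset E_{0}=\ker\hat{A}_{\Psi}$, it reduces to the canonical inclusion $\wnu_{x}\hookrightarrow\rr^{n}$. Moreover for $v\in\wnu_{x}$ and $w\in T_{x}H(x)$ one has $\meti v,d\pi_{x}w\metd=\meti v,w\metd-\meti\hat{A}_{\Psi}v,w\metd=\meti v,w\metd$, which will give the orthogonality needed for $\hor^{*}$.

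First I would check that $\nu^{*}:=\pi_{*}(\wnu)$ is well defined and $C^{\infty}$. For $p\in M$ put $\nu^{*}_{p}:=d\pi_{x}(\wnu_{x})$ with $x\in\pi^{-1}(p)=S(x)$; since $E_{1}(x)\subset T_{x}H(x)\perp\wnu_{x}$, this is injective, and by the remark above it equals $\wnu_{x}$ as a linear subspace of $\rr^{n}$. If $x'\in S(x)$, then $(\ref{sigma})$ gives $\ws(x)=\ws(x')+(x-x')$, so the affine subspaces $\ws(x),\ws(x')$ are translates of one another and $\wnu_{x}=\wnu_{x'}$ in $\rr^{n}$; thus $\nu^{*}_{p}$ is independent of $x$, and smoothness follows by reading the definition along a local section of $\pi$. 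Next I would identify the leaves: because $\ws(x)$ lies in an integral manifold of the nullity of $N$, $\Psi$ is constant $\equiv\Psi(x)$ along $\ws(x)$, so $\pi$ restricted to $\ws(x)$ is the translation $y\mapsto y+\Psi(x)$ and $\pi(\ws(x))=\ws(x)+\Psi(x)$. This is an open piece of an affine subspace of $\rr^{n}$ contained in $M$, passing through $p=x+\Psi(x)$, with tangent space $\wnu_{x}=\nu^{*}_{p}$; hence it is the integral manifold $\Sigma^{*}(p)$ of $\nu^{*}$, which proves the displayed formula, the integrability of $\nu^{*}$, the fact that its leaves are totally geodesic in $\rr^{n}$ (and so in $M$), and — affine subspaces being autoparallel in $\rr^{n}$ — that $\nu^{*}$ is autoparallel.

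For the complementary distribution, the holonomy tubes foliate $N$ with $\wnu=\nu_{N}H(x)$ and $E_{1}(x)=T_{x}S(x)\subset T_{x}H(x)$; since $\ker d\pi_{x}=E_{1}(x)$, $\pi|_{H(x)}$ is (locally) a submersion onto a submanifold of $M$ of dimension $\dim M-\dim\wnu$, and the inner-product identity of the first paragraph gives $\nu^{*}_{p}=\wnu_{x}\perp d\pi_{x}(T_{x}H(x))$. Hence $\hor^{*}_{p}=(\nu^{*}_{p})^{\bot}=T_{p}\pi(H(x))$, so $\hor^{*}$ is integrable with leaf through $p$ locally equal to $\pi(H(x))$. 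Pushing Lemma \ref{ellema} forward through $\pi$ — legitimately, since $d\pi$ is the identity on $\wnu$ and, by $(\ref{sigma})$ and $(\ref{proj})$, the family $\{\ws\}$ and the parallel normal fields $\varsigma$ are carried consistently by $\pi$ — shows that $\nu^{*}|_{\pi(H(x))}$ is a parallel and flat sub-bundle of $\nu(\pi(H(x)))$ in $\rr^{n}$. It then remains to see $\nu^{*}\subset\nul$ on $M$: fix $v\in\nu^{*}_{p}$ and decompose $T_{p}M=\nu^{*}_{p}\oplus\hor^{*}_{p}$. For $w\in\nu^{*}_{p}$, $\alpha(v,w)=0$ because $\Sigma^{*}(p)$ is totally geodesic in $\rr^{n}$; for $w\in\hor^{*}_{p}=T_{p}\pi(H(x))$, extend $v$ to a section $V$ of $\nu^{*}$ over $\pi(H(x))$ that is parallel in the normal connection of $\pi(H(x))$, note that $V$ is simultaneously a normal field of $\pi(H(x))$ and a tangent field of $M$, and compute $\widetilde{\nabla}_{w}V$ both by the Weingarten formula of $\pi(H(x))$ and by the Gauss formula of $M$; projecting onto $\nu_{p}M$ kills everything (the $\nu_{p}M$-component of $\nabla^{\bot}_{w}V$ vanishes since $\nabla^{\bot}_{w}V\in\nu^{*}_{p}$), so $\alpha(v,w)=0$. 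Thus $\alpha(v,\cdot)\equiv0$ and $v\in\nul_{p}$; and then $\meti A_{\xi}w,v\metd=\meti A_{\xi}v,w\metd=0$ for $w\in\hor^{*}_{p}$, $v\in\nu^{*}_{p}$, $\xi\in\nu_{p}M$, so $\hor^{*}$ is invariant under the shape operators of $M$.

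I expect the main obstacle to be the third step: making precise that the parallel, flat normal sub-bundle structure of $\wnu$ along the tubes $H(x)$ given by Lemma \ref{ellema} really descends to $M$ along $\pi(H(x))$. This forces one to keep careful track of how the normal connections of $N$, of $H(x)$, of $M$ and of $\pi(H(x))$ interact under $\pi$; once one records that $d\pi$ is the identity on $\wnu\subset E_{0}$ and that $(\ref{sigma})$ and $(\ref{proj})$ say exactly that $\pi$ translates the family $\{\ws\}$ coherently, this reduces to the routine bookkeeping referred to as "standard" (cf.\ Section 2.5 of \cite{ocds}); the remaining items — well-definedness of $\nu^{*}$, the formula for $\Sigma^{*}$, autoparallelism, and the short second-fundamental-form comparison giving $\nu^{*}\subset\nul$ — are then immediate.
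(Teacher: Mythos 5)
Your argument is correct and follows exactly the route the paper intends: it declares the lemma ``standard to prove'' after establishing equations (\ref{sigma}) and (\ref{proj}) and Lemma \ref{ellema}, and your write-up fleshes out precisely those ingredients (the tube formula $d\pi=\mathrm{Id}-\hat{A}_{\Psi}$, the constancy of $\wnu$ along the fibers $S(z)$, and the descent of the parallel normal fields $\varsigma$ to $\pi(H(x))$, which is the content of Remark \ref{rem1}). The one step you rightly flag as the main bookkeeping point --- that $\varsigma^{*}$ is parallel in the normal connection of $\pi(H(x))$ --- does go through, using that $A^{H(x)}_{\varsigma}$ vanishes on $E_{1}$ because $\meti\varsigma,\overline{\Psi}\metd\equiv 0$.
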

\begin{rem}
\emph{
As a consecuence of the previous lemma, one gets that any parallel normal vector field to $H(x)$ (tangent to $N$) projects down to a  parallel normal vector field to  $\pi(H(x))$.}
\label{rem1}
\end{rem}
\begin{rem}
\emph{Observe that since $\nu^{*}\subset \nul$, then $\nul^{\bot}$ is contained in $\hor^{*}$, which is integrable (unlike $\nul^{\bot}$). This will be a key point to prove both the local and the global results. }
\end{rem}

\subsection{Some remarks on polar actions}
Let $Q$ be a space form. A Lie group $G\subset Iso(Q)$ is said to act polarly on $Q$ if there exists a complete, embedded and close submanifold $\Sigma$ that intersects each orbit of $G$ and is perpendicular to orbits at intersection points. The submanifold $\Sigma$ is called a section and it must be totally geodesic. The major property of polar actions is that maximal dimensional orbits are isoparametric submanifolds (see  \cite{palaisterng}, \cite{obc}). A point $p$ such that the orbit $G\cdot p$ is maximal dimensional is called a \textsl{principal point}. 

The following property is very simple to prove and will be very useful in the following sections.

\begin{lemma}
Let $Q$ be a space form and let $G\subset Iso(Q)$ be a Lie group such that its Lie algebra $\mathfrak{g}$ is linearly spanned by the Lie algebras $\mathfrak{g}_{i}$, $i\in I$. Let $G_{i}\subset Iso(Q)$, $i\in I$ be the Lie group associated to the Lie algebra $\mathfrak{g}_{i}$. If each $G_{i}$ acts polarly on $Q$, then the action of $G$ is polar (a section for the $G$-action through a point $p$ is obtained by intersecting the  sections for the $G_{i}$-actions through $p$)\qed

\label{gen}
\end{lemma}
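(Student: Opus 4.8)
The plan is to show directly that the submanifold obtained by intersecting sections of the $G_{i}$-actions is a section for $G$. First I would reduce to a finite index set: since $\alg$ is finite dimensional and $\alg=\sum_{i\in I}\alg_{i}$ as a vector space, finitely many of the $\alg_{i}$ already span $\alg$, so after relabeling we may assume $\alg=\alg_{1}+\cdots+\alg_{m}$ (and we may assume $G$ connected, since a section for the identity component is again a section for $G$). Next I would fix the base point with care: because the set of principal points of an isometric action on the space form $Q$ is open and dense and there are only finitely many actions $G,G_{1},\dots,G_{m}$ involved, there is a point $p\in Q$ which is principal for all of them. For such $p$ the section of each $G_{j}$-action through $p$ is unique, namely the complete totally geodesic submanifold $\Sigma_{j}:=\exp_{p}\big((T_{p}(G_{j}\cdot p))^{\bot}\big)$, and I set $\Sigma$ to be the connected component through $p$ of $\Sigma_{1}\cap\cdots\cap\Sigma_{m}$. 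Since in a space form an intersection of complete totally geodesic submanifolds through a common point is again a complete, embedded, closed totally geodesic submanifold (an affine subspace, a great subsphere, or a totally geodesic hyperbolic subspace, depending on the ambient), $\Sigma$ is of this type, and reading the intersection in normal coordinates at $p$ gives $T_{p}\Sigma=\bigcap_{j}T_{p}\Sigma_{j}$.

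The next step is the orthogonality, which is pure linear algebra. For $q\in\Sigma$ the map $\alg\to T_{q}Q$, $X\mapsto X^{*}_{q}$ (with $X^{*}$ the Killing field of $Q$ induced by $X$) is linear with image $T_{q}(G\cdot q)$, so $\alg=\alg_{1}+\cdots+\alg_{m}$ gives $T_{q}(G\cdot q)=\sum_{j}T_{q}(G_{j}\cdot q)$. On the other hand $\Sigma\subset\Sigma_{j}$ and $\Sigma_{j}$ is a section for $G_{j}$, hence $T_{q}\Sigma\subset T_{q}\Sigma_{j}\perp T_{q}(G_{j}\cdot q)$ for every $j$; therefore $T_{q}\Sigma\perp\sum_{j}T_{q}(G_{j}\cdot q)=T_{q}(G\cdot q)$, i.e. $\Sigma$ meets every $G$-orbit it intersects orthogonally. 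Running the same argument at $q=p$, where in addition $T_{p}\Sigma_{j}=(T_{p}(G_{j}\cdot p))^{\bot}$ exactly (as $p$ is $G_{j}$-principal), yields $T_{p}\Sigma=\bigcap_{j}(T_{p}(G_{j}\cdot p))^{\bot}=\big(\sum_{j}T_{p}(G_{j}\cdot p)\big)^{\bot}=(T_{p}(G\cdot p))^{\bot}$; since $p$ is also $G$-principal, this means $\dim\Sigma+\dim(G\cdot p)=\dim Q$.

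Finally I would invoke the standard fact (see \cite{palaisterng}, \cite{obc}) that a complete totally geodesic submanifold meeting a principal orbit $G\cdot p$ orthogonally and satisfying $\dim\Sigma+\dim(G\cdot p)=\dim Q$ necessarily meets every $G$-orbit, i.e. $G\cdot\Sigma=Q$ --- the action map $G\times\Sigma\to Q$ is a submersion at $(e,p)$, so $G\cdot\Sigma$ is open, and it is closed as well by a standard argument using the completeness of $\Sigma$, hence all of the connected manifold $Q$. Combined with the orthogonality from the previous paragraph, this shows that $\Sigma$ is a section for $G$, so $G$ acts polarly; and $\Sigma$ is, as asserted, obtained by intersecting sections of the $G_{i}$-actions through $p$. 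The points that need the most care are the choice of a common principal point $p$ (which is why one first passes to a finite index set, a Baire-type argument not being available for arbitrary $I$) and the closedness of $G\cdot\Sigma$; everything else is the linear algebra above and the elementary geometry of totally geodesic submanifolds of space forms.
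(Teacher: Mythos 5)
The paper gives no proof of this lemma (it is stated with a \qed{} as ``very simple to prove''), so there is nothing to compare line by line; judged on its own, your argument is the natural one and is essentially correct: reduce to finitely many $\mathfrak{g}_{i}$, pick a common regular point $p$, intersect the (unique, totally geodesic) sections $\Sigma_{j}=\exp_{p}\bigl((T_{p}(G_{j}\cdot p))^{\bot}\bigr)$, and use $T_{q}(G\cdot q)=\sum_{j}T_{q}(G_{j}\cdot q)$ to get orthogonality of $\Sigma$ to every orbit it meets, plus the dimension count at $p$. Two points deserve tightening. First, openness of $G\cdot\Sigma$: submersivity of the action map at $(e,p)$ only gives that $G\cdot\Sigma$ contains a neighbourhood of the orbit of $p$; at a point $s\in\Sigma$ with $\dim(G\cdot s)<\dim(G\cdot p)$ the sum $T_{s}(G\cdot s)+T_{s}\Sigma$ need not be all of $T_{s}Q$. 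The fix is standard: the set $W$ of points of $\Sigma$ with maximal-dimensional $G$-orbit is open and dense in $\Sigma$ (by analyticity of Killing fields on a space form, the degenerate set is a proper analytic subset of the connected analytic manifold $\Sigma$, since $p\in W$), and $G\cdot W$ is open. Second, and more substantively, the ``closed, hence all of $Q$'' step is genuinely delicate when $G$ is not closed in $\mathrm{Iso}(Q)$ (the orbits need not be closed, so limits $g_{n}s_{n}\to q$ need not land in $G\cdot\Sigma$); the usual arguments pass to the closure of $G$. In the framework of this paper you can simply avoid the issue: once you know that on the open set $G\cdot W$ the normal spaces to the (maximal-dimensional) orbits form an integrable distribution --- which your construction already provides, the integral manifolds being the translates $g\cdot\Sigma$ --- Lemma \ref{polar} and the paper's stated convention that locally polar actions are polar finish the proof without any open--closed argument. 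With either repair the proof is complete, and it also yields the parenthetical description of the section through the common regular point $p$.
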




The action of  $G$ is said to be \textsl{locally polar} if the distribution given by the normal spaces to maximal dimensional orbits is integrable (and hence with totally geodesic integral manifolds). The group $G$ may not be closed but the maximal dimensional orbits of an action that is locally polar form a parallel family of isoparametric submanifolds, and so  the closure of $G$ acts polarly on $Q$ and has the same orbits as $G$ (see \cite{palaisterng}, \cite{obc},  \cite{heinliuol}). Therefore we shall make no difference between a polar action and a locally polar action. The following lemma is standard to prove.



\begin{lemma}
Let $G$ be a Lie subgroup of $Iso(Q)$. Assume  there is an open subset $U$ of $Q$ such that the normal spaces of the orbits through points of $U$ define an integrable distribution on $U$. Then the action of $G$ is polar. 
\label{polar}
\end{lemma}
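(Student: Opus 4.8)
The plan is to reduce the global statement to the local integrability hypothesis by exploiting the fact, recalled just above, that a locally polar action is already as good as a polar one: the maximal-dimensional orbits of such an action form a parallel family of isoparametric submanifolds, the closure $\overline{G}$ acts polarly on $Q$, and $\overline{G}$ has the same orbits as $G$. So it suffices to upgrade the hypothesis "the normal distribution to the maximal orbits is integrable on some open set $U$" to "it is integrable on (an open dense subset of) all of $Q$", and then invoke that fact to conclude that $\overline{G}$, and hence $G$, is polar.

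First I would pass to the open subset $Q_{\mathrm{reg}}\subset Q$ of principal points for the $G$-action; this is open and dense, and on it the orbits have locally constant (maximal) dimension and the normal spaces to the orbits define a smooth distribution $\mathcal{D}$. The claim is that $\mathcal{D}$ is integrable on all of $Q_{\mathrm{reg}}$. For this I would use homogeneity: $G$ acts on $Q_{\mathrm{reg}}$ and, since $G\subset Iso(Q)$ acts by isometries preserving the orbit foliation, each $g\in G$ carries $\mathcal{D}$ to $\mathcal{D}$. Integrability of a smooth distribution is a pointwise (in fact local) condition — it can be checked via the Frobenius bracket condition $[\mathcal{D},\mathcal{D}]\subset\mathcal{D}$ — and this condition is invariant under the isometries $g\in G$. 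Hence if $\mathcal{D}$ is integrable near one point $p\in U$, it is integrable near every point of the orbit $G\cdot p$.

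The remaining step is to propagate integrability off a single orbit to all of $Q_{\mathrm{reg}}$, and this is where I expect the main subtlety to lie. The point is that $U$ need not meet every orbit, so homogeneity along orbits alone is not enough. I would argue as follows: the set $V\subset Q_{\mathrm{reg}}$ where $\mathcal{D}$ is integrable is open (Frobenius is an open condition) and, by the homogeneity argument, $G$-invariant, hence a union of principal orbits. On the open dense set $U\cap Q_{\mathrm{reg}}$ where $\mathcal{D}$ is integrable, each integral leaf is totally geodesic (the normal distribution to orbits of an isometric action, when integrable, has totally geodesic leaves — this is the standard fact about sections, used already in the definition of a polar action); these leaves are pieces of totally geodesic submanifolds of the space form $Q$, which extend to complete totally geodesic submanifolds $\Sigma$. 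Such a $\Sigma$ meets every $G$-orbit: moving along $\Sigma$ from a principal point in $U$ one sweeps out, via the $G$-action, a neighborhood that by dimension count ($\dim\Sigma+\dim(\text{principal orbit})=\dim Q$) is open, and then a standard connectedness/closedness argument (the union of orbits met by the $G$-translates of $\Sigma$ is open and closed in $Q$) shows it is all of $Q$. Therefore $\Sigma$ is a section, so the $G$-action is polar by definition; equivalently $\mathcal{D}$ is integrable on all of $Q_{\mathrm{reg}}$, $G$ is locally polar, and by the remark preceding the lemma its closure acts polarly with the same orbits, which is what we wanted. The one genuine obstacle is making the "$\Sigma$ meets every orbit" step rigorous without circularity; the cleanest route is the open-and-closed argument just sketched, using completeness of $Q$ and of the totally geodesic $\Sigma$, rather than trying to transport leaves directly.
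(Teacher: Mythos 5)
The paper gives no proof of this lemma (it is dismissed as ``standard''), and your outline is the standard route: extend a totally geodesic leaf of the normal distribution on $U$ to a complete totally geodesic $\Sigma=\exp_{p}(\nu_{p}(G\cdot p))$ and show it is a section. But two steps are genuinely missing. First, a section must be perpendicular to the orbits at \emph{all} intersection points, and your argument only yields perpendicularity of $\Sigma$ to orbits along $\Sigma\cap G\cdot U$, where integrability (hence total geodesy of the leaves) is known; for $q\in\Sigma$ outside that set you have said nothing, and nothing forces $\Sigma$ to stay inside $G\cdot U$. The standard fix: for each Killing field $Z$ induced by the Lie algebra of $G$, the tangential projection $Z^{\top}$ of $Z$ onto the totally geodesic submanifold $\Sigma$ is a Killing field of $\Sigma$; it vanishes on the nonempty open subset $\Sigma\cap U$, hence identically on the connected $\Sigma$, so $T_{q}\Sigma\perp T_{q}(G\cdot q)$ at every $q\in\Sigma$.

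Second, your ``open and closed'' argument for ``$\Sigma$ meets every orbit'' does not go through as stated: $G\cdot\Sigma$ is open near principal points by the dimension count, but its closedness is unclear --- $G$ need not be closed in $Iso(Q)$, and a convergent sequence $g_{n}\cdot s_{n}\to x$ gives no control on $s_{n}\in\Sigma$ (nor is $G\cdot\Sigma$ obviously open at non-principal points). The clean argument is metric: for $x\in Q$, minimize the distance from $x$ to the closed orbit $\overline{G}\cdot p=\overline{G\cdot p}$; a minimizing geodesic meets it perpendicularly at some $g_{0}\cdot p$, so its initial velocity lies in $\nu_{g_{0}p}(\overline{G}\cdot p)\subset\nu_{g_{0}p}(G\cdot p)=T_{g_{0}p}(g_{0}\Sigma)$, whence $x\in g_{0}\Sigma$ and $g_{0}^{-1}x\in\Sigma\cap(G\cdot x)$. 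Two minor points: $U\cap Q_{\mathrm{reg}}$ is open but not dense, as you wrote (harmless); and it is worth noting explicitly that the hypothesis forces the orbits through $U$ to have maximal dimension, since their dimension is constant on $U$ (the normal spaces form a distribution) while the maximal-dimension stratum is dense.
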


\section{The Local Theorem}
\label{localsection}

\begin{thm}
\label{localth}Let $M$ be a submanifold of the Euclidean space $\rr^{n}$ or the sphere $\esf^{n}$ and let $\nul_{p}$ be the nullity subspace of $M$ at $p\in M$. If $U$ is an open subset of $M$ and $p,q\in U$, denote by $p\sim_{U} q$ if $p$ and $q$ can be joined by a curve $c$ contained in $U$ such that $c'(t)\; \bot\; \nul_{c(t)}$ for every $t$. Let $[p]_{U}$ be the equivalent class $\{q\in U: p\sim_{U} q\}$, for $p\in U$. 

There is an open and dense subset$\ \mathcal{C}$ of $M$ such that for every $p\in \mathcal{C}\ $
one and only one of the following statements holds: 
\begin{enumerate}
\item $[p]_{U}$ contains a neighborhood of $p$, for any open neighborhood $U$ of $p$.
\item There exists an open neighborhood $U$ of $p$ and a proper submanifold $S$ of $U$ such that $p\in S$ and $U$ is the union of parallel manifolds to $S$ over a parallel flat sub-bundle $\nu^{*}$ of $\nu(S)$ contained in $\nul_{U}$. Furthermore, the leaves of this parallel foliation are, locally, equivalent classes. 
\end{enumerate}

\end{thm}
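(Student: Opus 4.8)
The strategy is to show that, near a generic point $p$, the equivalence class $[p]_{U}$ is precisely the leaf through $p$ of the integrable distribution $\hor^{*}=(\nu^{*})^{\bot}$ produced in Lemma \ref{lema1}, and then to read off the two alternatives according to whether $\hor^{*}=TM$ near $p$ or not. As the open dense set I would take $\mathcal{C}$ to be the set of points of $M$ at which the index of nullity of $M$, together with all the integer-valued quantities entering the construction of Section \ref{tuboesferico} (the index of nullity of a local spherical tube $N=S_{\varepsilon}(M)$, the dimension of the holonomy tubes $H(x)$, and hence $\dim\nu^{*}$), are locally constant; by the usual semicontinuity arguments (the index of nullity cannot increase locally, and the remaining quantities are constant on an open dense set) this is open and dense in $M$, and near each $p\in\mathcal{C}$ Lemma \ref{lema1} furnishes, on a neighbourhood $U$ of $p$, the distributions $\nu^{*}\subset\nul$ and $\hor^{*}$, with $\hor^{*}$ integrable and its leaf through any point locally of the form $\pi(H(x))$. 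I will argue the case $M\subset\rr^{n}$; the spherical case reduces to it by replacing $M\subset\esf^{n}$ by its metric cone $\widehat{M}=\rr_{>0}\cdot M\subset\rr^{n+1}$, whose nullity at $x$ is $\rr x\oplus\nul^{\esf^{n}}$, so that curves in $M$ normal to the nullity correspond to curves in $\widehat{M}$ normal to the nullity and the parallel-foliation structure descends after quotienting the radial line (alternatively one repeats the tube argument with a spherical tube inside $\esf^{n}$).

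The core is the identity $[p]_{U}=\pi(H(x))$ (locally), for $x\in\pi^{-1}(p)$. One inclusion is immediate: by the remarks following Lemma \ref{lema1}, $\nul^{\bot}\subset\hor^{*}$, and $\hor^{*}$ is integrable, so a curve in $M$ tangent to $\nul^{\bot}$ stays in the $\hor^{*}$-leaf through $p$; hence $[p]_{U}\subset\pi(H(x))$ for $U$ small, since there $[p]_{U}$ is monotone in $U$. For the reverse inclusion recall that $E_{0}$ is the nullity of the hypersurface $N$ and, by construction, $H(x)=\{y\in N:x\sim y\}$, where $x\sim y$ means $x$ and $y$ are joined by a curve in $N$ everywhere normal to $E_{0}$. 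Take $x\in N$, let $q=\pi(x)$ and let $\eta\in\nu_{q}M$ be the unit vector with $x=q+\varepsilon\eta$. By the tube formula for the shape operator $\hat{A}$ of $N$, the fibre directions $E_{1}$ lie in $\ker(Id-\hat{A}_{\Psi})$, while on $T_{q}M$ the operator $\hat{A}_{\Psi}$ has the same kernel as $A^{M}_{\eta}$; thus $E_{0}(x)=\ker A^{M}_{\eta}\subset T_{q}M$, and $T_{x}N=T_{q}M\oplus E_{1}(x)=E_{0}(x)\oplus E_{1}(x)\oplus\mathrm{Im}(A^{M}_{\eta})$ is an orthogonal decomposition (here $E_{1}\subset\nu_{q}M$). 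Since $d\pi_{x}$ is the identity on $T_{q}M$ and annihilates $E_{1}$, it carries $E_{0}(x)^{\bot}\subset T_{x}N$ onto $\mathrm{Im}(A^{M}_{\eta})=(\ker A^{M}_{\eta})^{\bot}\subset\nul_{q}^{\bot}$. Consequently the $\pi$-image of any curve in $N$ normal to $E_{0}$ is a curve in $M$ normal to $\nul$; applying this to a curve realizing $x\sim y$ yields $\pi(y)\in[p]_{U}$, so $\pi(H(x))\subset[p]_{U}$. Therefore $[p]_{U}$ equals the $\hor^{*}$-leaf through $p$ inside $U$, for $U$ small enough.

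Now the dichotomy. If $\hor^{*}=TM$ near $p$, equivalently $\nu^{*}=0$, then the $\hor^{*}$-leaf through $p$ is a neighbourhood of $p$; by monotonicity of $[p]_{U}$ in $U$, any given neighbourhood $U$ of $p$ contains a small $U'$ with $[p]_{U'}=U'$, so $[p]_{U}$ contains a neighbourhood of $p$, which is alternative (1). If instead $\nu^{*}\neq0$ near $p$, pick $U$ a product chart for the two complementary integrable foliations $\nu^{*}$ and $\hor^{*}$ and let $S$ denote the $\hor^{*}$-leaf through $p$ in $U$, a proper submanifold of $U$ with $p\in S$. By Lemma \ref{lema1}, $\nu^{*}|_{S}$ is a parallel flat sub-bundle of $\nu(S)$ contained in $\nul$, the leaves of $\nu^{*}$ are the affine subspaces $\Sigma^{*}(q)=q+\nu^{*}_{q}$ ($q\in S$), and these fill $U$; choosing a parallel frame $\zeta_{1},\dots,\zeta_{r}$ of $\nu^{*}|_{S}$, the parallel manifolds $S_{\zeta}=\{q+\zeta(q):q\in S\}$ with $\zeta$ a constant-coefficient combination of the $\zeta_{i}$ exhaust $U$, so $U$ is the union of the parallel manifolds to $S$ over $\nu^{*}|_{S}$. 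Each $S_{\zeta}$ is moreover an integral manifold of $\hor^{*}$: for $v\in T_{q}S$ and $\zeta$ parallel one has $df_{\zeta}(v)=v-A^{S}_{\zeta}v\in T_{q}S$, which is orthogonal to $\nu^{*}_{q}=\nu^{*}|_{q+\zeta(q)}$, so $T_{q+\zeta(q)}S_{\zeta}\subset\hor^{*}$ and equality holds by dimension; hence, by the identity above applied along $S_{\zeta}$, each $S_{\zeta}$ is locally an equivalence class, which is alternative (2). The two alternatives are mutually exclusive because in case (2) the class $[p]_{U}$ is contained in the proper submanifold $S$; and on $\mathcal{C}$ exactly one of them holds since $\dim\nu^{*}$ is locally constant.

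The step I expect to be the main obstacle is the reverse inclusion $\pi(H(x))\subset[p]_{U}$: it rests both on the description $H(x)=\{y\in N:x\sim y\}$ (taken from \cite{ocds}) and on the shape-operator computation for the spherical tube needed to push curves forward through the focal projection $\pi$; once these are in place, the remainder is bookkeeping with Lemma \ref{lema1} and elementary linear algebra.
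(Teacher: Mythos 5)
Your proposal is correct and follows essentially the same route as the paper: the same generic set $\mathcal{C}$, the identification $[p]_{U}=\pi(H(x))$ via the spherical tube, the holonomy tubes and Lemma \ref{lema1} (with the forward inclusion from integrability of $\hor^{*}\supset\nul^{\bot}$ and the reverse inclusion from the tube formula pushing $E_{0}$-horizontal curves down to $\nul$-horizontal curves), and the dichotomy governed by whether $\nu^{*}$ is trivial. Even your reduction of the spherical case via the metric cone coincides with the paper's construction, since $\overline{M}=\bigcup_{-\delta<\varepsilon<\delta}M_{\varepsilon\eta}$ with $\eta(p)=-p$ is exactly a truncated cone over $M$; you merely make explicit the linear algebra that the paper leaves as ``not hard to see.''
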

\begin{proof}
We will keep the notations introduced in section \ref{tuboesferico}. 
Assume first that $M$ is a submanifold of the Euclidean space. 
Let $\widetilde{\mathcal{C}}$ be the open and dense subset of $M$ of points $p$ such that the index of nullity $\mu$ is constant around $p$. 
Let $p\in \widetilde{\mathcal{C}}$ and let $U$ be an open neighborhood of $p$ such that there is a well defined spherical tube around $U$. We say that $p$ is a generic point of $\widetilde{\mathcal{C}}$ if $p$ is in the image (via the radial projection) of an open subset of the spherical tube where the index of nullity of the tube is constant and the holonomy tubes $H(x)$ have constan dimension (cf. section \ref{tuboesferico}). 
It is not difficult to see that the set $\mathcal{C}$ of general points of $\widetilde{\mathcal{C}}$ is open and dense in $M$. 

Let $p\in \mathcal{C}$ and assume condition (1) does not hold. Let $U$ be an open neighboorhod of $p$, let $N$ be a spherical tube around $U$ and let $\pi:V\to \pi(V)$ be the radial projection. Let $V$ be an open part of $N$ with constant index of nullity such that $p\in\pi(V)$.  let $\Psi$ be the radial normal vector field defined on $V$ and consider the holonomy tubes $$H(x)=(N_{\widetilde{\Psi}})_{-\widetilde{\Psi}(x)}$$
(cf. section \ref{tuboesferico}), which have constant dimension on $V$. 

Then, from Lemma \ref{lema1}, $U$ is foliated (locally around $p$) by the submanifolds $\pi(H(x))$, which from statemen (3) of Lemma \ref{ellema}, are parallel manifolds over the parallel and flat sub-bundle $\nu^{*}$ of $\nu \pi(H(x))$.

We shall see that if $q=\pi(x)$, then 
\begin{equation}
[q]_{U}=\pi(H(x))\ \text(locally\  around\ q).
\label{eqloc}
\end{equation}

If $y\in H(x)$ near $x$, then there is a curve $\widetilde{c}$ in $V$ joining $x$ and $y$ everywhere perpendicular to $E_{0}=\ker\hat{A}_{\Psi}$, where $\hat{A}$ is the shape operator of $N$. Set $c(t)=\pi(\widetilde{c}(t))=\widetilde{c}(t)+\Psi(\widetilde{c}(t))$. It is not hard to see that $c$ is perpendicular to $(\pi)_{*}(E_{0})$. 
From the tube formula relating the shape operators of $M$ and $N$ (see \cite{obc}), one gets $\nul\subset(\pi)_{*}(E_{0})$. Then $c(t)$ is a horizontal curve (with respect to $\nul$) in $U$ joining $q$ and $\pi(y)$. We conclude that $\pi(H(x))\subset [q]_{U}$ (locally around $q$). 

The other inclusion follows from the fact that the distribution $\hor^{*}$ defined in Lemma \ref{lema1} is integrable and its integral manifolds are the sets $\pi(H(x))$. 
In fact, if $q'\in [q]_{U}$ there is a horizontal curve (with respect to $\nul$) $c(t)$ joining $q$ and $q'$ contained in $U$. But from lemma \ref{lema1}, $\nu^{*}\subset\nul$. Hence $c'(t)\;\bot\;\nu^{*}(c(t))$, that is, $c'(t)\in (\hor^{*})_{c(t)}$, for all $t$. Then $q'$ is in the same integral manifold of $\hor^{*}$ than $q$, so $q'\in\pi(H(x))$.

Suppose now that $M$ is a submanifold of the sphere. Consider the position vector field on $\esf^{n}$ given by $\eta(p)=-p$. We can chose a real positive number $\delta$ small enough such that $$\overline{M}:=\bigcup_{-\delta<\varepsilon<\delta}M_{\varepsilon\eta}$$ is a well defined submanifold of $\rr^{n+1}$, where $M_{\varepsilon\eta}$ is the parallel manifold to $M$ defined by the parallel normal vector field $\varepsilon\eta$. Observe that $M_{\varepsilon\eta}$ is contained in the sphere $\esf_{(1-\varepsilon)}^{n}$ of radius $1-\varepsilon$.

Denote by $\pi_{\varepsilon}:M\to M_{\varepsilon\eta}$ the usual parallel map (observe that $\pi_{0}=Id_{M}$). Let $\overline{\nul}$ be the nullity distribution of $\overline{M}$. Then it is not difficult to see that $$\overline{\nul}_{\pi_{\varepsilon}(p)}=(d\pi_{\varepsilon})_{p}(\nul_{p})\oplus \rr\eta(p)$$

Let $\overline{U}$ be an open set in $\overline{M}$ such that $U:=\overline{U}\cap M$ is open in $M$. Denote by $[q]_{\overline{U}}^{*}$ the set of points in $\overline{M}$ that can be joined to $q$ by a curve perpendicular to $\overline{\nul}$ contained in $\overline{U}$. If $q\in M_{\varepsilon\eta}$, then $[q]^{*}\subset M_{\varepsilon\eta}$ and in particular if $p\in M$, then $[p]_{\overline{U}}^{*}=[p]_{U}$, the set of point of $M$ that can be joined to $p$ by a curve perpendicular to $\nul$ contained in $U$. The theorem for $M$ then follows by applying to $\overline{M}$ the result for submanifolds of the Euclidean space. 
\end{proof}

\section{The Global Theorem}
\label{global}
If $f:M\to\rr^{n}$ is an immersed submanifold, we say that $M$ is \emph{irreducible} if there is no non trivial, $A$-invariant,  autoparallel distribution $\dd$ on $M$ such that $\mathcal{D}^{\bot}$ is also autoparallel. This means that $M$ can not be expressed, locally, as the product of submanifolds of the Euclidean space. If $M$ is complete and simply connected, this is equivalent to the fact that $f$ is not a product of immersions (see \cite{obc}). If $M$ is a submanifold of the sphere, we say that $M$ is irreducible if it is irreducible as a submanifold of the Euclidean space. 

In this section we will prove Theorem \ref{globalthm}. 


Consider the fiber bundle $\pr:M\to M/\nul$ defined in section \ref{fibrado}. We will denote, as in the previous sections, $M_{r}:=\pr^{-1}(r)$ for $r\in M/\nul$.

\begin{lemma}
For any $r\in M/\nul$, the restricted holonomy group $\Phi_{r}^{0}$ acts either transitively or polarly on $M_{r}$. 
\label{accionpolar}
\end{lemma}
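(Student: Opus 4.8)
The plan is to analyze the action of $\Phi_r^0$ on the fiber $M_r$ — which is either an affine $l$-space (Euclidean case) or an $l$-sphere (spherical case) — and to dichotomize according to whether the orbits are full-dimensional. First I would fix $r\in M/\nul$ and observe that $\Phi_r^0$ is a connected Lie subgroup of $\mathrm{Iso}_0(M_r)$; if it acts transitively on $M_r$ we are in the first alternative and there is nothing more to prove. So I would assume from now on that no orbit $\Phi_r^0\cdot p$ is all of $M_r$, and the goal becomes to show the distribution of normal spaces to the $\Phi_r^0$-orbits is integrable, because then Lemma \ref{polar} (with $U$ an open set of principal points in $M_r$) immediately gives that $\Phi_r^0$ acts polarly.

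The heart of the argument should be the Ambrose--Singer type description in Lemma \ref{ambrose}: $\Phi_r^0$ is generated by the subgroups $\pe_c(\Phi^{loc}_{c(1)})$ as $c$ ranges over curves in $M/\nul$ from $r$ to points $c(1)$ in a prescribed dense subset $\mathcal{C}$ of $M/\nul$. I would take $\mathcal{C}$ to be (the image under $\pr$ of) the open dense set of generic points from the Local Theorem, Theorem \ref{localth}. At such a point, case (2) of Theorem \ref{localth} — when case (1) fails — exhibits a local parallel foliation by the submanifolds $\pi(H(x))$ over the parallel flat sub-bundle $\nu^*\subset\nul$, and Lemma \ref{lema1} plus Lemma \ref{ellema} describe precisely how the holonomy moves the fibers. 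The point is that near a generic point the $\nul$-parallel transport around small loops is governed by the parallel-foliation structure, so the local holonomy group $\Phi^{loc}_{c(1)}$ is built from isometries of $M_{c(1)}$ that come from moving along the flat normal bundle $\nu^*$ of a leaf $\pi(H(x))$; by the results on parallel manifolds of isoparametric-type submanifolds (Lemma \ref{lema1}, and the remarks following it), each such local holonomy action is a polar action on the fiber. Conjugating by $\pe_c$ preserves polarity (an isometry conjugate of a polar action is polar), so every generator $\pe_c(\Phi^{loc}_{c(1)})$ acts polarly on $M_r$.

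Then Lemma \ref{gen} does the rest: the Lie algebra $\mathfrak{g}$ of $\Phi_r^0$ is spanned by the Lie algebras $\mathfrak{g}_i$ of the generating subgroups $\pe_c(\Phi^{loc}_{c(1)})$, each of which acts polarly on $M_r$, hence $\Phi_r^0$ itself acts polarly. (In the spherical case one works with $M_r=\esf^l$ and the subgroups inside $O(l+1)$ exactly as above; the Euclidean case uses $\mathrm{Aff}(l)$, and Lemma \ref{gen} is stated for a general space form so it applies verbatim.)

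The main obstacle I expect is the middle step: showing that each local holonomy group $\Phi^{loc}_{c(1)}$ — not merely each one-parameter subgroup coming from a single parallel normal direction — acts polarly on the fiber. This requires unwinding, at a generic point, the relation between $\nul$-parallel transport around null-homotopic loops and the geometry of the parallel foliation $\{\pi(H(x))\}$ from Lemma \ref{lema1}: one must see that the restricted local holonomy is carried, via the local trivialization $\rho$ of section \ref{fibrado}, into a subgroup of $\mathrm{Iso}(M_r)$ whose orbits are (pieces of) the fibers of the focal/parallel structure, whose normal distribution is the integrable $\hor^*$ of Lemma \ref{lema1}; integrability of that normal distribution, pushed through, is what forces polarity. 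Verifying that this normal distribution is genuinely integrable on an open set of principal points of the fiber — so that Lemma \ref{polar} can be invoked — is the delicate bookkeeping part. Everything else is assembling Lemmas \ref{ambrose}, \ref{lema1}, \ref{gen}, \ref{polar} and Theorem \ref{localth}.
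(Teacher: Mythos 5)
Your overall architecture --- dichotomize on transitivity, reduce to polarity of the local holonomy groups at generic points via Lemma \ref{ambrose}, and assemble the conclusion with Lemma \ref{gen} and Lemma \ref{polar} --- is exactly the paper's. However, the step you flag as ``the main obstacle'' and defer as ``delicate bookkeeping'' is precisely the content of the paper's proof, and nothing in your plan actually supplies it. The missing statement is the identity
$$\Phi^{loc}_{r}\cdot q=[q]_{U}\cap M_{r}\quad(\text{locally around }q),$$
where $U$ is the neighborhood produced by Theorem \ref{localth} and $[q]_{U}$ is the horizontal equivalence class. One inclusion, $[q]_{U}\cap M_{r}\subset\Phi^{loc}_{r}\cdot q$, comes from shrinking $U$ so that $\Phi^{loc}_{r}=\Phi^{0}(r,\pr(U))$. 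The reverse inclusion is the nontrivial one: the paper introduces an auxiliary metric making $\pr$ a Riemannian submersion and invokes the argument of the Appendix of \cite{olmosesch} to show that parallel transports along loops of length less than $\delta$ already contain a neighborhood of the identity in $\Phi^{loc}_{r}$, while the horizontal lifts of such short loops stay inside $U$; hence the local holonomy orbit is \emph{open} in $[q]_{U}\cap M_{r}$. Without this openness you cannot transfer the integrability coming from the parallel foliation of Theorem \ref{localth}(2) to the normal spaces of the actual holonomy orbits, which is what Lemma \ref{polar} needs. Also note the preliminary step you skipped: one must first argue that non-transitivity forces alternative (2) of Theorem \ref{localth} at \emph{every} point of $\mathcal{C}$ (if (1) held at some $p$, the orbit through $p$ would be open in $M_{\pr(p)}$ and the action would be transitive).

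A smaller correction: the distribution normal (inside the fiber $M_{r}$) to the orbits $[q]_{U}\cap M_{r}$ is not $\hor^{*}$ but $\nu^{*}$. The leaves $\pi(H(x))$ are integral manifolds of $\hor^{*}$, so the orbits are pieces of $\pi(H(x))\cap M_{r}$ with tangent space $\hor^{*}\cap\nul$ and normal space $\nu^{*}\subset\nul=TM_{r}$; it is the autoparallelity and flatness of $\nu^{*}$ (Lemma \ref{lema1}), giving totally geodesic integral manifolds, that makes Lemma \ref{polar} applicable. Your concluding appeal to Lemma \ref{gen} and the conjugation-preserves-polarity remark are fine as stated.
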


\begin{proof}
We will keep the notations of Theorem \ref{localth}. 

Assume first that $M$ is a submanifold of the Euclidean Space. In order to simplify the exposition, we will treat $M$ as an embedded submanifold identifying $i(M)$ with $M$.

First of all, observe that if $\Phi_{r}^{0}$ acts transitively on $M_{r}$ for some $r$, then $\Phi^{0}_{s}$ acts transitively on $M_{s}$ for each $s\in M/\nul$. So let us assume that this action is not transitive. 

Let $\mathcal{C}$ be the open dense subset of $M$ given by Theorem \ref{localth}. If condition (1) holds for some $p$, then $p$ can be joined by a horizontal curve with respect to $\nul$ to any other point in an open neighborhood of $p$ in $M_{r}$ (with $r=\pr(p)$). Then it is not difficult to see that $p$ can be joined to any other point of $M_{r}$ by a horizontal curve, and so the action of $\Phi_{r}^{0}$ on $M$ is transitive. Hence statement (2) on Theorem \ref{localth} must hold for every point of $\mathcal{C}$. 

So let $p\in\mathcal{C}$ and let $U$ be the open neighborhood of $p$ given by theorem \ref{localth}. Set $r=\pr(p)$. We shall see that $$\Phi^{loc}_{r}\cdot q= [q]_{U}\cap M_{r}\ (\text{locally around }q)$$

We can assume (possibly by taking a smaller $U$) that $\Phi^{loc}_{r}=\Phi^{0}(r,\pr(U))$. Then $[q]_{U}\cap M_{r}\subset\Phi^{loc}_{r}\cdot q$.

Now fix a Riemannian metric on $M/\nul$ and define a Riemannian metric on $M$ such that the vertical and horizontal distributions defined by $\pr$ (i.e.,\,$\nul$ and its orthogonal complement) are orthogonal and $\pr$ is a Riemannian submersion. 

For a fixed $\delta>0$  let $P_{\delta}$ be the subset of $\Phi^{loc}_{r}$  consisting of the parallel transport transformations determined by curves in $\pr(U)$ of length less than $\delta$.  Then, following the same ideas of \cite[Appendix]{olmosesch} we obtain that $P_{\delta}$ contains an open neighborhood $\mathcal{U}(r)$ of the identity in $\Phi^{loc}_{r}$. We may take $\delta$  small enough such that $B_{\delta}(q)$ (an open ball for the new metric) is contained in $U$. Then  if $c$ is a loop based at $r$ of length less than $\delta$, its horizontal lift is contained in $U$. So $\mathcal{U}(r)\cdot q$ is an open neighborhood of $q$ in $\Phi^{loc}_{r}\cdot q$ contained in $[q]_{U}$. This proves the other inclusion. 

From statement (2) in Theorem \ref{localth} and Lemma \ref{polar} we conclude that the action of $\Phi_{r}^{loc}$ on $M_{r}$ is polar. 

Let $\widetilde{\mathcal{C}}:=\pr^{-1}(\pr(\mathcal{C}))$.  Then  for every $p\in \widetilde{\mathcal{C}}$, the local holonomy group $\Phi^{loc}_{\pr(p)}$ acts polarly on $M_{\pr(p)}$. The lemma now follows from Lemma \ref{ambrose} and Lemma \ref{gen}

For the case of a submanifold of the sphere, the proof follows in the same way. 
\end{proof}

\begin{proof}[Proof of Theorem \ref{globalthm}]
Since we are working in the cathegory of immersed submanifolds, we may assume that $M$ is simply connected (eventually by passing to the universal cover). 

For $p\in M$ denote by $[p]$ the set of points of $M$ that can be joined to $p$ by a curve horizontal with respect to $\nul$.  Let $V$ be the open and dense subset of principal points of $M$ for the action of the restricted holonomy groups $\Phi^{0}_{r}$. Observe that $V\cap M_{r}$ is also open dense on each fiber $M_{r}$. 

Assume first that $M$ is a submanifold of the Euclidean space. 
If $p\in V$ and $r=\pr(p)$, since the action of $\Phi^{0}_{r}$ is polar by Lemma \ref{accionpolar},  then $\oo p$ is a complete embedded isoparametric submanifold of the Euclidean space $M_{r}$. Therefore $$\oo p=E_{0}(p)\times S(p)$$ where $E_{0}$ is  the nullity subspace of $\oo p$ at $p$ and $S(p)$ is a compact isoparametric submanifold of a sphere (see \cite{palaisterng} or \cite[Theor. 5.2.11]{obc}).

Let $\dd(p)$ be the nullity subspace of $[p]$ at $p$, regarding $[p]$ as a submanifold of $M$ (and not of $\rr^{n+k}$). Set $\hor_{p}:=\nul^{\bot}_{p}\subset T_{p}M$. We shall see that $$\dd(p)=\hor_{p}\oplus E_{0}(p).$$
Fix $p\in V$, $r=\pr(p)$ and let $\xi_{p}\in \nu_{p}(\oo p)\subset \nul_{p}$. 
If $q\in[p]$ and $c$ is a curve in $M/\nul$ such that $q=\pe_{c}(p)$, set $\xi(q):=(d\pe_{c})_{p}(\xi_{p})$. Since $\oo p\subset M_{r}$ is a principal orbit, $\xi$ is a well defined normal vector field to $[p]$ . Moreover, since the action is polar, $\xi$ is parallel in the directions of the orbits of the holonomy groups (cf. \cite{hot} or \cite[Cor. 3.2.5]{obc}). It is also parallel in the directions of $\hor=\nul^{\bot}$. In fact, let $\sigma(t)$ be any horizontal curve contained in $[p]$. Set $p'=\sigma(0)$, $p''=p'+\xi(\sigma(0))$. Observe that the horizontal spaces $\hor$ are constant along any fiber of $\pr:M\to M/\nul$ (since we move along the nullity of $M$).  If $\delta(t)$ is the horizontal lift of $\pr(\sigma)$ through $p''$, then it is not difficult to see that $\xi(\sigma(t))=\delta(t)-\sigma(t)$ and so $\frac{d}{dt}\xi(\sigma(t))=\delta'(t)-\sigma'(t)$, which is horizontal and, in particular, tangent to $[p]$. So $\xi$ is parallel with respect to the normal connection of $[p]$. 

This also proves that $[p+\xi_{p}]$ is the parallel (possibly focal) manifold $[p]_{\xi}$ to $[p]$. 

Let $q=p+\xi_{p}$ and so $\hor_{q}$ and $\hor_{p}$ coincide (as subspaces of $\rr^{n+k}$). Since they are both isomorphic to $T_{\pr(p)}M/\nul$ via $d\pr$, one has the isomorphism  $$\pp:=d\pr_{q}^{-1}\circ (d\pr_{p})_{|\hor_{p}}:\hor_{p}\to\hor_{q}\simeq\hor_{p}.$$ Let $X\in T_{\pr(p)} M/\nul$ and $c(t)$ a curve in $M/\nul$ such that $c(0)=\pr(p)$ and $c'(0)=X$. Let $\sigma(t)$ and $\beta(t)$ be the horizontal lifts of $c$ through $p$ and $q$ respectively.  We have seen that $\beta(t)=\sigma(t)+\xi(\sigma(t))$. So 
$$\beta'(0)=\sigma'(0)+\frac{d}{dt}\xi(\sigma(t))=(Id-\widehat{A}_{\xi_{p}})\sigma'(0)$$
where $\widehat{A}_{\xi_{p}}$ is the shape operator of $[p]$ as a submanifold of $M$ (which coincides with the shape operator as a submanifold of $\rr^{n+k}$). 

Hence $\pp=(Id-\widehat{A}_{\xi_{p}})$ is an isomorphism from  $\hor_{p}$ to $\hor_{q}\simeq\hor_{p}$ for each $\xi_{p}\in\nu_{p}(\oo p)$. Suppose now that there exists a normal vector $\xi_{p}$ to the orbit $\oo p$ such that $\widehat{A}_{\xi_{p}|\hor_{p}}\neq0$. Then there is an eigenvector $v\in\hor_{p}$ associated to a real eigenvalue $\lambda\neq 0$ of $\hat{A}$. Then $(Id-\widehat{A}_{\xi_{p}/\lambda})v=0$, which can not occur. So $\hor_{p}$ is contained in the nullity subspace of $[p]$ at $p$. We conclude that $\mathcal{D}(p)=E_{0}(p)\oplus \hor_{p}$ as we wanted to show. 

Recall that $\mathcal{D}(p)$ is defined only on the dense subset $V$. 

Now observe that since any two maximal dimensional orbits on the same fiber $M_{r}$ are parallel manifolds, they have the same extrinsic Euclidean factor (regarded as submanifolds of $M_{r}$). Therefore, the subspaces $E_{0}(p)$ can be all identified on $V\cap M_{r}$ for every fiber $M_{r}$. We can hence extend the distribution $E_{0}$ to the hole $M_{r}$ define $E_{0}(q)$ for $q\in M_{r}$ as the common subspace $E_{0}(p)$ for any $p\in M_{r}\cap V$. If $p$ and $q$ are in different integral manifolds, then there is a point $q'\in M_{\pr(q)}$ such that $\oo q'$ is the parallel translated of $\oo p$ (along an appropriate curve in $M/\nul$) and is therefore isometric to it. So $dim(E_{0}(q))=dim(E_{0}(q'))=dim(E_{0}(p))$ and $\mathcal{D}$ is a well defined differential distribution on $M$. 

We will prove that $\mathcal{D}$ and $\mathcal{D}^{\bot}$ are autoparallel and invariant under the shape operators $A$ of $M$. 

Observe that $\mathcal{D}^{\bot}(p)$ in $M$ is the orthogonal complement of $E_{0}(p)$ in $M_{\pr(p)}$ and so $\mathcal{D}^{\bot}$ is an autoparallel distribution on $M$, which is parallel when restricted to any fiber $M_{r}$. Since $\hor$ is $A$-invariant and $A_{|E_{0}}\equiv 0$, we get that $\mathcal{D}$ is $A$-invariant. Denote by $\nu^{M}[p]$ and by $\nu^{\rr^{n+k}}[p]$ the normal bundles of $[p]$ as a submanifold of $M$ or $\rr^{n+k}$ respectively. Since $[p]$ is $A$-invariant, then $\nu^{M}[p]$ is a parallel sub-bundle of $\nu^{\rr^{n+k}}[p]$. From Codazzi equation, the  distribution $\dd$  (which is the nullity of $[p]$ associated to a parallel sub-bundle of $\nu^{\rr^{n+k}}[p]$) is autoparallel in $\mathcal{V}$. 
 Since $V$ is dense in $M$, we get that $\dd$ is autoparallel. 

Since $M$ is irreducible, $\dd^{\bot}$ must be trivial. So any orbit $\oo p$ coincides with the whole integral manifold $M_{\pr(p)}$. 

Assume now that $M$ is a submanifold of the sphere $\esf^{n+k}$. 
As in the proof of Theorem \ref{localth}, let $\eta(p)=-p$ be the position normal vector field and set  $N:=\bigcup_{-\delta<\varepsilon<\delta}M_{\varepsilon\eta}$. 

If $r\in M/\nul$, let $L_{r}:=T_{p}M_{r}\oplus \rr\eta_{p}$. $L_{p}$ is the smallest linear subspace of $\rr^{n+k+1}$ that contains the sphere $M_{r}$. 

Let $p\in V$, where $V$ is as in the Euclidean case. Suppose there exists a  normal vector $0\neq\xi_{p}\in \nu_{p}[p]$, where $\nu_{p}[p]$ is the normal space of $[p]$ as a submanifold of $M$. Then $\xi_{p}\in L_{\pr(p)}$ is normal to the isoparametric orbit $\Phi^{0}_{r}\cdot p$. We can extend $\xi_{p}$ to a normal parallel vector field $\xi$ to $[p]$ (as in the Euclidean case).

Consider in $\rr^{n+k+1}$ the parallel manifold $\widetilde{[p]}:=[p]_{\xi}$ and then consider the projection $\widetilde{[p]}_{\lambda\eta}$ of this submanifold to the sphere, where $\lambda\neq 1$ is a suitable real number. Then $[p]$ and $\widetilde{[p]}_{\lambda\eta}$ are parallel manifolds on the sphere $\esf^{n+k}$. Let $\pi_{\xi}:[p]\to \widetilde{[p]}$ and $\pi_{\lambda\eta}:\widetilde{[p]}\to \widetilde{[p]}_{\lambda\eta}$ be the corresponding focal maps. Then $\pi_{\lambda\eta}\circ\pi_{\xi}(p)\in M_{\pr(p)}$.

Observe that the orthogonal complement $\hor_{p}$ to $\nul_{p}$ in $T_{p}M$ is constant along $M_{\pr(p)}$. In fact, if we consider the nullity $\overline{\nul}$ of $N$, then $\hor$ is also the horizontal space associated to $\overline{\nul}$ and therefore constant along $L_{\pr(p)}$. 

Then it is not difficult to see that the isomorphism $d\pr_{q}^{-1}\circ d\pr_{p}$ from $\hor_{p}$ to $\hor_{q}\simeq\hor_{p}$ is given by $d\pi_{\lambda\eta}\circ (Id-\hat{A}_{\xi})_{|\hor_{p}}$, where $\hat{A}$ is the shape operator of $[p]$.
In the same way as in the Euclidean case, we conclude that $\hat{A}_{\xi|\hor_{p}}\equiv 0$. 

Let $\dd(p)$ be the nullity subspace of $[p]$ at $p$ as a submanifold of $M$. Then $\hor_{p}\subset\dd(p)$. Since the orbits of $\Phi^{0}_{r}$ through points in $V$ are isoparametric submanifolds of a sphere we may assume that they have no nullity in the sphere (eventually by passing to a nearby parallel orbit).

Then $\hor_{|V}=\dd$ is an autoparallel distribution on $V$, and since $V$ is dense, $\hor$ is autoparallel in $M$. 
Since $M$ is irreducible, and both $\hor$ and $\nul$ are non trivial, the normal space to any orbit $\Phi^{0}_{\pr(p)}\cdot p$ is trivial, or equivalently the orbit coincides with $M_{r}$.
\end{proof}

\begin{rem} \emph{It is possible to show with a simple example, that the global theorem is false for submanifolds of the hyperbolic space. We will construct a $2$-dimensional $1-1$ immersed complete submanifold of the hyperbolic space $\hip^{3}$ with constant index of nullity $1$ (and such that the perpendicular distribution to the nullity is hence integrable). This submanifold is given as a union of orbits of points in a geodesic by a $1$-parameter subgroup of isometries.}

\noindent
\emph{Regard $\hip^{3}$ as the connected component through $e_{4}=(0,0,0,1)$ of  $\{x\in\loren^{4}: \meti x,x\metd=-1\}$ where $\loren^{4}$ is the space $\rr^{4}$ with the Lorentz metric $\meti x,y\metd=x_{1}y_{1}+x_{2}y_{2}+x_{3}y_{3}-x_{4}y_{4}$. Let $\sigma$ be the geodesic trough $e_{4}$ in $\hip^{3}$ such that $\sigma'(0)=v$, that is, $\sigma(s)=\sinh(s)v+\cosh(s)e_{4}$, and let $w\in T_{e_{4}}\hip^{3}$ such that $w\; \bot\; v$. Consider a matrix $B\in \mathfrak{so}(3)$ such that $Bv=0$ and $Bw\neq0$, and such that $A=\left(
\begin{array}{cc}
B& w\\
w^{\mathbf{t}}& 0\\
\end{array}\right)$ verifies $A^{3}=0$ (it is not difficult to see that it is always possible). Observe that $A\in\mathfrak{so}_{1}(4)$, the Lie algebra of the group of isometries of $\hip^{3}$. Let $X$ be the Killing vector field on $\hip^{3}$ defined by $A$ and let $\{\pp_{t}\}$ the one parameter subgroup associated to it. 
 Define the function $$f:\rr^{2}\to\hip^{3}\; /\; (t,s)\mapsto f(t,s)=\pp_{t}(\sigma(s))=e^{tA}\sigma(s).$$
 Then it is not hard to see that $f$ is a $1-1$ immersion. Let $M=f(\rr^{2})\subset\hip^{3}$. Then it is not difficult to prove that 
 \begin{enumerate}
 \item[i)] The nullity subspace of $M$ at $p=f(s,t)$ is generated by the tangent vector $(d\pp_{t})_{\sigma(s)}\sigma'(s)$. Therefore $M$ has constant index of nullity $1$. 
 \item[ii)] If $M$ were an extrinsic product in the Lorentz space $\loren^{4}$, it would have constant sectional curvature equal to $0$. But since it is a surface in $\hip^{3}$ with nullity, it has constant curvature $-1$. Therefore $M$ is irreducible. 
 \item[iii)] Since $A^{3}=0$, $\pp_{t}=\text{e}^{tA}=I+tA+\frac{t^{2}}{2}A^{2}$. Then it is not difficult to show that any Cauchy sequence on $M$ is convergent, and therefore $M$ is complete 
 \end{enumerate}
\label{hyp}
This procedure can be generalized to higher dimensions, by asking further properties to the matrix $A$. }
 \end{rem}

\textbf{Acknowledgement}: This work is part of the author's Ph.D. thesis, written in FCEIA, UNR, directed of Prof. Carlos Olmos. 

\bibliographystyle{alpha}
\bibliography{Articulo}

\begin{thebibliography}{CDO09}

\bibitem[BCO03]{obc}
J.~Berndt, S.~Console, and C.~Olmos.
\newblock {\em Submanifolds and holonomy}.
\newblock Chapman and Hall. Boca Raton, 2003.

\bibitem[CDO09]{ocds}
S.~Console, A.~DiScala, and C.~Olmos.
\newblock A berger type normal holonomy theorem for complex submanifolds.
\newblock {\em To appear in Math. Ann., ArXiv:math.DG0807.3419v2}, 2009.

\bibitem[Daj90]{daj}
M.~Dajczer.
\newblock {\em Submanifolds and Isometric Immersions}.
\newblock Math. Lecture Series 13. Publish or Perish, 1990.

\bibitem[DG85]{dajgrom}
M.~Dajczer and D.~Gromoll.
\newblock Gauss parametrizations and rigidity aspects of submanifolds.
\newblock {\em J. of Diff. Geom.}, 22:1--12, 1985.

\bibitem[EO94]{olmosesch}
J.~H Eschenburg and C.~Olmos.
\newblock Rank and symmetry of riemannian manifolds.
\newblock {\em Comment. Math. Helv.}, 69:483--499, 1994.

\bibitem[Fer71]{ferus}
D.~Ferus.
\newblock On the completeness of nullity foliations.
\newblock {\em Michigan Math. J.}, 18:61--64, 1971.

\bibitem[Gro99]{gromov}
M.~Gromov.
\newblock {\em Metric structures for Riemannian and non-Riemannian spaces}.
\newblock Progr. Math.,152. Birkhäuser Boston, 1999.

\bibitem[HL99]{Heintze}
E.~Heintze and X.~Liu.
\newblock Homogeneity of infinite dimensional isoparametric submanifolds.
\newblock {\em Annals of Math.}, 149(2):149--181, 1999.

\bibitem[HLO06]{heinliuol}
E.~Heintze, X.~Liu, and C.~Olmos.
\newblock Isoparametric submanifolds and a chevalley-type restriction theorem.
\newblock {\em Integrable systems, geometry and topology, AMS}, pages 151--190,
  2006.

\bibitem[HOT91]{hot}
E.~Heintze, C.~Olmos, and G.~Thorbergsson.
\newblock Submanifolds with constant principal curvatures and normal holonomy
  groups.
\newblock {\em Int. J. Math Vol 2 No 2}, 2:167--175, 1991.

\bibitem[Kel75]{kelley}
J.~L. Kelley.
\newblock {\em General Topology}.
\newblock Graduate Texts in Mathematics. Springer, 1975.

\bibitem[KN63]{kn}
S.~Kobayashi and K.~Nomizu.
\newblock {\em Foundations of differential geometry}, volume~1.
\newblock John Wiley and Sons, 1963.

\bibitem[Olm05]{olberger}
C.~Olmos.
\newblock A geometric proof of the berger holonomy theorem.
\newblock {\em Annals of Math.}, 161, 2005.

\bibitem[Pal57]{palais}
R.~Palais.
\newblock {\em A global formulation of the Lie theory of transformation
  groups}.
\newblock Mem. Amer. Math. Soc. 1957.

\bibitem[Poo81]{poor}
W.~A. Poor.
\newblock {\em Differential Geometric Structures}.
\newblock McGraw-Hill, 1981.

\bibitem[PT88]{palaisterng}
R.~Palais and C.L. Terng.
\newblock {\em Critical point theory and submanifold geometry}.
\newblock Lecture Notes in Mathematics. Springer-Verlag, 1988.

\bibitem[Tho91]{tho}
G.~Thorbergsson.
\newblock Isoparametric foliations and their buildings.
\newblock {\em Annals of Math.}, 133(2):429--446, 1991.

\end{thebibliography}

\end{document}